\documentclass[12pt]{amsart}

\usepackage{color}
\usepackage{amssymb}
\usepackage{graphics}
\usepackage{tikz}
\usepackage{multicol}

\usepackage{subfigure}
\usepackage{bm}

\numberwithin{equation}{section} 
\newtheorem{thm}{Theorem}[section]
\newtheorem{lem}[thm]{Lemma}
\newtheorem{prop}[thm]{Proposition}
\newtheorem{cor}[thm]{Corollary}

\usepackage[colorlinks,citecolor=red,pagebackref,hypertexnames=false]{hyperref}

\newcommand{\R}{\mathbb{R}}
\newcommand{\N}{\mathbb{N}}

\theoremstyle{plain}

\usepackage{mathtools,mathrsfs,enumitem}
\allowdisplaybreaks

\newcommand{\diam}{\operatorname{diam}}
\newcommand{\dist}{\operatorname{dist}}
\newcommand{\id}{\operatorname{id}}

\begin{document}
\baselineskip 16pt

\title[Inhomogeneous $C^1$ attractors]{Upper box dimension of inhomogeneous attractors of $C^1$ iterated function systems}

\author{Yu-Feng Wu}
\date{}

\address[]{School of Mathematics and Statistics\\HNP-LAMA\\Central South University\\ Changsha, 410083, P. R. China}
\email{yufengwu.wu@csu.edu.cn}

\subjclass[2020]{Primary 28A80; Secondary 37C45}

\keywords{Inhomogeneous attractor, iterated function system, upper box dimension, singularity dimension, orbital set, subshift}

\begin{abstract}
We prove that the upper box dimension of an inhomogeneous attractor of a $C^1$ iterated function system in $\R^d$ is bounded above by the maximum of the singularity dimension of the system and the upper box dimension of the condensation set. This extends the upper bound of Burrell and Fraser~\cite{BurrellFraser} for inhomogeneous self-affine sets  to the $C^1$ setting. The covering argument of Burrell and Fraser does not
apply directly to nonlinear compositions. We overcome this difficulty by combining their approach with an extension of the recursive covering estimate of Feng and Simon~ \cite{FengSimonI}. We also obtain the corresponding bound for orbital sets driven by arbitrary compact subshifts.  
\end{abstract}

\maketitle

\section{Introduction}

Dimension theory of iterated function systems (IFSs) is a central theme in fractal geometry.
For self-conformal (including self-similar) IFSs, the
dimensions of the attractors are well understood under suitable separation conditions; see \cite{Patzschke1997} for classical results and
\cite{Hochman2014,Rapaport2024}
for more recent developments.  Despite substantial progress, the dimension theory of general non-conformal IFSs remains less complete. The work of Feng and Simon \cite{FengSimonI,FengSimonII}
on $C^1$ attractors is particularly relevant here.
In \cite{FengSimonI}, they proved that the upper box dimension
of a $C^1$ attractor is bounded above by the singularity dimension
of the system, extending a classical result of Falconer~\cite{Falconer1988} on self-affine sets.

In 1985, Barnsley and Demko \cite{BarnsleyDemko} introduced
a natural extension of the IFS construction by adjoining
a fixed compact set, called the condensation set, at each
stage of the iteration.
The resulting set, referred to as an inhomogeneous attractor, consists of the homogeneous attractor,
the condensation set, and its images under all finite
compositions of the defining maps. Its dimension therefore reflects both the underlying IFS
and the geometry of the condensation set.
The purpose of this paper is to obtain an upper bound for
the upper box dimension of inhomogeneous attractors
of $C^1$ IFSs in terms of the singularity dimension of the system
and the upper box dimension of the condensation set.

We now introduce the setting and notation.
Let $Z\subset\R^d$ be a non-empty compact set. We call a finite collection $\Phi=\{f_i\}_{i=1}^{\ell}$ of contracting maps,  with $f_i(Z)\subset Z$ for every $i$, an \emph{iterated function system (IFS)}.  By the classical theorem of Hutchinson \cite{Hutchinson},
there is a unique non-empty compact set $K\subset Z$ satisfying
\[
K=\bigcup_{i=1}^{\ell}f_i(K).
\]
We call $K$ the \emph{homogeneous attractor} of $\Phi$.

Given a compact set $C\subset Z$,
there is a unique non-empty
compact set $K_C\subset Z$ satisfying
\begin{equation}\label{eq:InhomogeneousIdentity}
	K_C=C\cup\bigcup_{i=1}^{\ell}f_i(K_C).
\end{equation}
The set $K_C$ is called the \emph{inhomogeneous attractor}
associated with $\Phi$ and $C$, and $C$ is called the
\emph{condensation set}. The existence and uniqueness of $K_C$ follow from the same contraction argument as in Hutchinson's theorem; see, for example, \cite{Snigireva2008}. 

The inhomogeneous attractor admits the following
orbital representation:
\begin{equation}\label{eqOrbital}
K_C=K\cup C\cup\bigcup_{n=1}^{\infty}\bigcup_{i_1,\ldots,i_n\in\{1,\ldots,\ell\}}f_{i_1}\circ\cdots \circ f_{i_n}(C). 
\end{equation}
This was proved for self-similar
IFSs in \cite[Lemma~3.9]{Snigireva2008}.
As observed in \cite[Section~1.1, equation~(3)]{Fraser2012},
the same argument applies to finite IFSs of Lipschitz
contractions on compact metric spaces.

When the maps in $\Phi$ are similitudes, $K_{C}$ is called an inhomogeneous self-similar set. Inhomogeneous self-affine and self-conformal sets are defined analogously. In this paper, we are interested in the case when $\Phi$ consists of $C^1$ maps. More precisely, we assume that there is a bounded open set $U\supset Z$
such that each $f_i$ extends to a $C^1$ diffeomorphism
$f_i:U\to f_i(U)\subset U$, 
and there is a constant $0<\gamma<1$ such that
\begin{equation}\label{eq:contraction}
	|f_i(x)-f_i(y)|\leq\gamma|x-y|
	\qquad
	(x,y\in U,\ 1\leq i\leq\ell).
\end{equation}  
We call such a system $\Phi$ a \emph{$C^1$ IFS on $Z$}, and $K_C$ an \emph{inhomogeneous $C^1$ attractor}. Throughout the paper, the term $C^1$ IFS on a compact set
is understood in this sense. The extensions to $U$ are fixed throughout.
 
In the study of inhomogeneous attractors, 
a basic question is how the dimension of $K_C$ depends on
the dimensions of $K$ and $C$.
For Hausdorff dimension, the orbital representation
\eqref{eqOrbital}, together with countable stability
and the Lipschitz property of the maps, immediately gives
\[
\dim_{\mathrm H}K_C
=
\max\{\dim_{\mathrm H}K,\dim_{\mathrm H}C\}.
\]
The same identity holds for packing dimension. 
The corresponding question for upper box dimension is more
delicate because it is not countably stable.
In particular, the accumulation of infinitely many images
of $C$ at different scales requires additional covering estimates. For further results on the dimension theory of
inhomogeneous attractors, we refer to the survey
of Fraser \cite{FraserSurvey}.

For inhomogeneous self-similar sets,
Fraser \cite[Theorem~2.1]{Fraser2012} proved that
the upper box dimension is bounded above by the maximum
of the similarity dimension of the IFS and
the upper box dimension of the condensation set.
If the underlying self-similar IFS satisfies the open set
condition, this yields
\begin{equation*}
	\overline{\dim}_{\mathrm B}K_C
	=
	\max\{
	\dim_{\mathrm B}K,
	\overline{\dim}_{\mathrm B}C
	\}.
\end{equation*}
Wang and Huang \cite{WangHuang} studied upper and lower box
dimensions of inhomogeneous self-conformal sets with overlaps.
More recently, Orgov\'anyi and Rutar \cite{OrgovanyiRutar}
used two-scale branching functions to obtain formulas
for the lower box dimension and the Assouad spectrum
of inhomogeneous self-conformal sets under suitable
separation assumptions.

For IFSs consisting of arbitrary bi-Lipschitz contractions,
Burrell \cite{Burrell} obtained a general upper bound
involving the upper Lipschitz dimension. In the self-affine setting, this quantity uses only the largest singular value of each composition and
can be strictly larger than the affinity dimension. Burrell and Fraser
\cite[Theorem~2.1]{BurrellFraser} proved the more precise estimate
\[
\overline{\dim}_{\mathrm B}K_C
\leq
\max\{\dim_{\mathrm{AFF}}\Phi,
\overline{\dim}_{\mathrm B}C
\},
\]
where $\dim_{\mathrm{AFF}}\Phi$ is the affinity dimension
of the self-affine IFS $\Phi$,  introduced by Falconer~\cite{Falconer1988}.
This suggests asking whether an analogous bound holds
for nonlinear $C^1$ systems, with the affinity dimension
replaced by the singularity dimension.

For a $C^1$ IFS $\Phi$ on $Z$, let $\dim_{\mathrm S}\Phi$ denote its singularity dimension;
the precise definition is given in
Section~\ref{sec:preliminaries}.
Our main result gives an affirmative answer to the question above.

\begin{thm}\label{thm:main}
Let $\Phi=\{f_i\}_{i=1}^{\ell}$ be a $C^1$ IFS on a non-empty compact set $Z\subset\R^d$. For a compact set $C\subset Z$, let $K_C$ be
the associated inhomogeneous attractor.  Then
\[\overline{\dim}_{\mathrm B} K_C\leq \max\{\dim_{\mathrm S}\Phi,\overline{\dim}_{\mathrm B}C\}.\]
\end{thm}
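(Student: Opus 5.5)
Fix $s>\max\{\dim_{\mathrm S}\Phi,\overline{\dim}_{\mathrm B}C\}$. The goal is $N_r(K_C)\lesssim r^{-s}$ for all small $r$, where $N_r$ is the minimal number of $r$-balls needed to cover a set. By \eqref{eqOrbital} and the result of Feng and Simon \cite{FengSimonI} that $\overline{\dim}_{\mathrm B}K\le\dim_{\mathrm S}\Phi$, the homogeneous part $K$ and the set $C$ itself contribute at most $O(r^{-s})$. So everything reduces to covering $\bigcup_{n\ge1}\bigcup_{|I|=n}f_I(C)$ by $O(r^{-s})$ balls of radius $r$, uniformly in $r$.

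\textbf{Step 1: stopping.} For $r>0$, define a stopping family $\mathcal W_r$ of finite words. A word $I$ is in $\mathcal W_r$ when $\|Df_I\|_\infty$, or equivalently $\diam f_I(Z)$ up to constants, first falls below $r$. By contraction every infinite word has a unique prefix in $\mathcal W_r$. Each $f_I(C)$ with $I$ extending some $J\in\mathcal W_r$ lies inside $f_J(Z)$, and $f_J(Z)$ has diameter $\lesssim r$. Hence these images are covered together with $K$ by $\#\mathcal W_r$ balls, up to a constant factor. The Feng--Simon recursive covering estimate, extended slightly, should give $\#\{\text{such cells}\}\lesssim r^{-s}$ using $\sum_{|I|=n}\phi^s(f_I)$, with a bounded-distortion substitute for the singular-value function of $Df_I$ at a point.

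\textbf{Step 2: the remaining images.} What is left is $f_I(C)$ for the words $I$ strictly before stopping. For these $\diam f_I(Z)>r$, and we must cover $f_I(C)$ at scale $r$. In the Burrell--Fraser self-affine argument one covers $C$ by ellipses. Pulling back, $f_I^{-1}$ of an $r$-ball is roughly an ellipse with semi-axes $r/\alpha_j(I)$, and the count is
\[
N_r(f_I(C))\lesssim N_{r/\alpha_k}(C)\prod_{j<k}\frac{\alpha_j}{\alpha_k},
\]
which gives $\lesssim \phi^{s}(I)\,r^{-s}$ up to $\epsilon$ losses, using $t=\overline{\dim}_{\mathrm B}C<s$. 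Summing $\phi^s$ over all non-stopped words gives a geometric series, because $\sum_{|I|=n}\phi^{s}(I)$ decays exponentially in $n$ for $s>\dim_{\mathrm S}\Phi$. This yields $O(r^{-s})$.

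\textbf{Main obstacle.} The obstacle is that $f_I$ is nonlinear, so $f_I^{-1}$ of an $r$-ball is an ellipse-like set only on pieces of size where the linearization is accurate. That size shrinks with $|I|$, and there is no bounded distortion for $C^1$ maps. I would handle this as Feng--Simon do. Split $I$ into blocks of a fixed large length $m$, chosen so that on sets of small diameter each block map is within $\epsilon$ of its derivative. Then cover recursively: starting from a cover of $C$ at a coarse scale, push it through one block at a time. At each stage, subdivide the current ellipsoid-like pieces into pieces of controlled eccentricity. The multiplicative loss per block is $(1+\epsilon)^{O(1)}$, which is absorbed by the exponential decay $\sum\phi^{s'}<1$ on blocks for some $s'$ between $\dim_{\mathrm S}\Phi$ and $s$. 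Carrying the condensation-set scale $N_\rho(C)\lesssim\rho^{-t}$ through this recursion is exactly the extension of the recursive estimate needed, and it is the step where I expect most of the work to lie.
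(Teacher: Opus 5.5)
\textbf{There is a genuine gap, and it is in your stopping rule.} You stop when $\|Df_I\|$ (the \emph{largest} singular value, or $\diam f_I(Z)$) drops below $r$. Both of your counting claims, however, need the threshold to be on $\alpha_{k+1}$ with $k=\lfloor s\rfloor$.

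In Step~2, the Burrell--Fraser count gives
\[
N_r(f_I(C))\lesssim \Bigl(\frac{r}{\alpha_{k+1}(I)}\Bigr)^{-t}\prod_{j\le k}\frac{\alpha_j(I)}{\alpha_{k+1}(I)}=\Bigl(\frac{r}{\alpha_{k+1}(I)}\Bigr)^{s-t}\phi^s(I)\,r^{-s}.
\]
This is $\lesssim\phi^s(I)r^{-s}$ only when $\alpha_{k+1}(I)\gtrsim r$. Your non-stopped family also contains every word with $\alpha_{k+1}(I)<r\le\alpha_1(I)$. For those words, $\phi^s(I)r^{-s}$ can be far below $1\le N_r(f_I(C))$. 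Likewise in Step~1, $\#\mathcal W_r$ is governed by the pressure of $\alpha_1^s$, not by $\sum\phi^s$.

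Here is a concrete example. Take $Z=[0,1]^2$, $f_i(x,y)=(x/2+a_i,\,y/3+b_i)$ for $i=1,\dots,5$, and let $C$ be a single point. Then $\dim_{\mathrm S}\Phi=1+\log(5/2)/\log 3<1.84$. But $\#\mathcal W_r$ and the number of non-stopped words are both $\asymp r^{-\log 5/\log 2}$, with $\log 5/\log 2>2.3$. Every non-stopped word contributes at least one ball to your sum. So this decomposition can only produce an exponent governed by $\alpha_1$, which is Burrell's Lipschitz-type bound that the introduction says must be improved.

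Step~1's conclusion alone could be rescued. Each stopped cell meets $K$ and has diameter $\lesssim r$, so their union lies in an $O(r)$-neighbourhood of $K$, and the Feng--Simon bound for $K$ applies. Nothing similar rescues Step~2.

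\textbf{The missing idea is to stop at the first crossing of the scale by the $(k+1)$-th singular value.} In the paper, the scale is $\delta$ (your $r$), and $\mathcal S_\delta,\mathcal T_\delta$ are defined through $\widetilde H(I)=\sup_{[I]}H_{|I|}$ with $H(\omega)=\alpha_{k+1}(D_{\Pi(\sigma\omega)}f_{\omega_1})$. With this rule, your Step~2 estimate becomes valid for the words before stopping.

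The price is that the stopped cells $f_J(Z)$ are no longer small: they are long in $k$ directions. Each must be covered by $\lesssim G_{|J|}$ balls of radius $\delta$, using the covering lemma with $D=Z$. The sum over $\mathcal T_\delta$ is then bounded by the pressure, after a correction factor $e^{t\Delta_{n-1}(h)}$ with $\Delta_n(h)=o(n)$. This correction is needed because stopping is defined through a supremum over the cylinder, while the covering lemma is applied at a single point of $[J]$.

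This is why Lemma~\ref{coveringlem} is proved for arbitrary compact $D$, with constants independent of $D$. It is used with $D=C$ on $\mathcal S_\delta$ and with $D=Z$ on $\mathcal T_\delta$, and $K$ is absorbed into the $\mathcal T_\delta$ cells rather than quoted.

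Your remarks on the nonlinear obstacle roughly match the paper. The paper uses Lemma~\ref{FSlem} with derivatives taken at points of $K$ (reached after $n_0$ iterates). It then passes to the block systems $\Phi^{(n)}$ with condensation sets $C^{[n]}$, where the fixed per-step loss $(4d)^d$ is absorbed because $0<q_n-s\le (P_{\sigma^n}(g_n^s)+\log Q)/(n|\log\gamma|)$. But those remarks do not repair the decomposition.
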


No separation condition or bounded distortion assumption
is required in Theorem~\ref{thm:main}.
For self-affine IFSs, the singularity dimension agrees with
the affinity dimension, so Theorem~\ref{thm:main}
extends the upper bound of Burrell and Fraser. Taking $C=\emptyset$ gives $K_C=K$ and
$\overline{\dim}_{\mathrm B}C=0$, and hence Theorem~\ref{thm:main} also recovers the homogeneous upper bound
of Feng and Simon \cite{FengSimonI}.
Further consequences are discussed in
Section~\ref{sec:consequences}.

We also extend the upper bound to orbital sets whose defining
compositions are restricted by a compact subshift, with the singularity
dimension computed on that subshift; see Theorem~\ref{thm:subshift-orbits}.
This extension follows from Theorem~\ref{thm:main} by constructing
ordinary inhomogeneous attractors from the allowed blocks and using
the pressure approximation of Feng and Simon
\cite[Proposition~3.1]{FengSimonI}.
For IFSs consisting of similarities, the corresponding
orbital construction and upper
box-dimension bound were
studied by Dubey and Verma
\cite{DubeyVerma} in their work on inhomogeneous sub-self-similar sets.

We now outline the proof of Theorem~\ref{thm:main}.
The covering argument of Burrell and Fraser does not apply directly to the $C^1$ setting. In the affine case, the image of a ball under a long composition is an ellipsoid whose axes are exactly described by the singular values of the product matrix. 
This permits direct covering estimates for images
under arbitrarily long compositions.
For nonlinear $C^1$ systems, this exact
description is lost, and the $C^1$ assumption alone
does not provide uniform distortion control for long
compositions.
Feng and Simon~\cite{FengSimonI} overcome this obstruction in the homogeneous $C^1$
setting by applying a recursive covering estimate (cf.~\cite[Proposition~4.2]{FengSimonI}), which is based on a local covering estimate for $C^1$ images of balls (cf.~\cite[Lemma 4.1]{FengSimonI}).

To apply Feng and Simon's method to the inhomogeneous problem,
we extend the recursive covering estimate of
\cite[Proposition~4.2]{FengSimonI},
which concerns images of the homogeneous attractor $K$,
to images of arbitrary compact sets $D\subset Z$.
The resulting estimate, proved in Lemma~\ref{coveringlem},
retains the initial covering number of $D$ explicitly,
with constants independent of $D$.
The key observation is that uniform contraction brings
the relevant images of $D$ close to the homogeneous attractor
after a fixed number of iterates.
The local covering argument can then be continued recursively.
This allows us to control both the images of the condensation
set and those of the ambient set within the same estimate.

The remainder of the proof proceeds in two stages. First, we combine the covering estimate with a stopping
decomposition of the condensation orbit to obtain
an upper bound involving the zero of an additive pressure.
Second, we apply this bound to block IFSs and adapt
the argument of Feng and Simon to show that the
corresponding pressure zeros converge from above
to the singularity dimension.

The paper is organized as follows.
In Section~\ref{sec:preliminaries}, we introduce the notation and definitions.
In Section~\ref{sec:covering}, we establish the covering estimate used in the proof of the main theorem.
In Section~\ref{sec:mainproof}, we use this estimate
to prove Theorem~\ref{thm:main}. Section~\ref{sec:subshift-orbits} gives the extension to orbital sets
driven by subshifts.
Finally, in Section~\ref{sec:consequences}, we present consequences of the main result,
including an application to planar nonlinear IFSs.

\section{Preliminaries}\label{sec:preliminaries}

\subsection{Symbolic notation and the coding map}

Let $\mathcal{A}=\{1,\ldots,\ell\}$ be the alphabet associated to the IFS $\Phi=\{f_i\}_{i=1}^{\ell}$. Let $\Sigma=\mathcal{A}^{\N}$ be the shift space with left shift $\sigma$. Set
\[
\mathcal{A}^*=\bigcup_{n=1}^{\infty}\mathcal{A}^n.
\]
Thus $\mathcal{A}^*$ consists of the non-empty finite words over $\mathcal{A}$. Denote the empty word by $\vartheta$ and set $\mathcal{A}^0=\{\vartheta\}$. 
For a non-empty word $I=i_1\cdots i_n\in\mathcal{A}^n$, write
\[
 |I|=n,
 \qquad
 f_I=f_{i_1}\circ\cdots\circ f_{i_n}, 
\]
and let $I^-=i_1\cdots i_{n-1}$, with $I^-=\vartheta$
when $n=1$.  For the empty word, set $|\vartheta|=0$ and $f_{\vartheta}=\id$, where $\id$ denotes the identity map.  For $\omega=(\omega_j)_{j\geq1}\in\Sigma$ and $n\geq 1$, write $\omega|n=\omega_1\cdots\omega_n$ and $[I]=\{\omega\in\Sigma:\omega|n=I\}$. We adopt the convention that $[\vartheta]=\Sigma$. 

The coding map $\Pi:\Sigma\to K$ is the continuous surjection given by
\[
 \Pi(\omega)=\lim_{n\to\infty}f_{\omega|n}(x_0),
\]
where $x_0\in Z$ is arbitrary.  Note that for every $\omega\in\Sigma$ and $n\geq 1$, we have 
\[\Pi(\omega)=f_{\omega|n}(\Pi(\sigma^n\omega)).\]

\subsection{Covering numbers and box dimension}
Throughout, for $x\in\R^d$ and $r>0$, let $B(x,r)$ denote the closed ball
centred at $x$ with radius $r$.
For a non-empty bounded set $E\subset\R^d$ and $r>0$, let $N_r(E)$ be the least number of closed balls of radius $r$ needed to cover $E$. We also set $N_r(\emptyset)=0$ for every $r>0$. The upper box dimension of $E$ is defined by 
\[
\overline{\dim}_{\mathrm B}E=\limsup_{r\to 0^+}\frac{\log N_r(E)}{-\log r}.
\]
The lower box dimension $\underline{\dim}_{\mathrm B}E$ is defined by replacing
$\limsup$ with $\liminf$.
If the two values coincide, the common value is called
the box dimension of $E$ and is denoted by $\dim_{\mathrm B}E$. By convention, we set $\underline{\dim}_{\mathrm B}\emptyset
=\overline{\dim}_{\mathrm B}\emptyset
=\dim_{\mathrm B}\emptyset=0$.

We shall repeatedly use the elementary consequence that, if $t>\overline{\dim}_{\mathrm B}E$, then for every fixed $r_0>0$ there is a constant $M=M(E,t,r_0)$ such that
\begin{equation}\label{eq:BoxCoverPower}
 N_r(E)\leq Mr^{-t}
 \qquad(0<r\leq r_0).
\end{equation}

\subsection{Pressure and singularity dimension}

For a $d\times d$ real matrix $A\in\R^{d\times d}$, write
$\alpha_1(A)\geq\cdots\geq\alpha_d(A)$
for its singular values. Following \cite{Falconer1988}, for $s\geq0$ we define the singular-value function $\phi^s:\R^{d\times d}\to [0,\infty)$ as 
\begin{equation*}
 \phi^s(A)=
 \begin{cases}
  \alpha_1(A)\cdots\alpha_k(A)\alpha_{k+1}(A)^{s-k},
     & 0\leq s<d, \\
  |\det A|^{s/d},&s\geq d,
 \end{cases}
\end{equation*}
where $k=\lfloor s\rfloor$  is the integral part of $s$. Here $\phi^0(A):=1$. 
The singular-value function is submultiplicative:
\begin{equation}\label{eq:phi-submultiplicative}
 \phi^s(AB)\leq\phi^s(A)\phi^s(B).
\end{equation}

For a $C^1$ map $h:U\to\mathbb R^d$ and $z\in U$,
we write $D_zh$ for the derivative of $h$ at $z$.
For $s\geq0$, define the  continuous subadditive potential $\mathcal{G}^s=\{g_n^s\}_{n\geq1}$ on $\Sigma$ by
\begin{equation}\label{eqGs}
 g_n^s(\omega)
 =\log\phi^s\!\left(D_{\Pi(\sigma^n\omega)}f_{\omega|n}\right),\qquad \omega\in \Sigma.
\end{equation}
The chain rule and \eqref{eq:phi-submultiplicative} imply that 
\[
 g_{m+n}^s(\omega)
 \leq g_m^s(\omega)+g_n^s(\sigma^m\omega).
\]
Its subadditive topological pressure is
\begin{equation*}
 P_{\sigma}(\mathcal{G}^s)
 =\lim_{n\to\infty}\frac1n
  \log\sum_{I\in\mathcal{A}^n}
  \sup_{\omega\in[I]}\exp\bigl(g_n^s(\omega)\bigr).
\end{equation*}
The limit exists by subadditivity. The function $s\mapsto P_\sigma(\mathcal G^s)$ is continuous
and strictly decreasing on $[0,\infty)$, with
$P_\sigma(\mathcal G^0)=\log\ell\geq0$ and
$P_\sigma(\mathcal G^s)\to-\infty$ as $s\to\infty$.
Following \cite[Definition~2.5]{FengSimonI},
the \emph{singularity dimension} of $\Phi$, denoted by
$\dim_{\rm S}\Phi$, is the unique non-negative number $s$
such that $P_\sigma(\mathcal G^s)=0$.

For a continuous function $\psi:\Sigma\to\R$, we also use the classical topological pressure
\begin{equation*}
 P_\sigma(\psi)
 =\lim_{n\to\infty}\frac1n
 \log\sum_{I\in\mathcal{A}^n}
 \exp\left(\sup_{\omega\in[I]}S_n\psi(\omega)\right),
\end{equation*}
where $S_n\psi=\sum_{j=0}^{n-1}\psi\circ\sigma^j$. When the underlying shift space $Y$ and shift map $T$
need to be specified, we write $P(Y,T,\cdot)$.

\section{A covering estimate}\label{sec:covering}

In this section, we establish a covering estimate for
$f_{\omega|n}(D)$, $\omega\in\Sigma$, $n\in\N$,  that is uniform over compact sets $D\subset Z$.
We begin with the following local covering lemma of Feng and Simon.
 
\begin{lem}\cite[Lemma~4.1]{FengSimonI}\label{FSlem}
Let $U\subset \R^d$ be an open set and $f:U\to \R^d$ be a non-degenerate $C^1$ map. Let $E\subset U$ be compact and $k\in \{0,1,\ldots,d-1\}$. 
Then there exists \(r_0 > 0\) such that, for any \(y \in E\), \(z \in B(y, r_0)\) and \(0 < r < r_0\), the set \(f(B(z, r))\) can be covered by at most
\[
(4d)^d \cdot \frac{\phi^k(D_y f)}{(\alpha_{k+1}(D_y f))^k}
\]
balls of radius \(\alpha_{k+1}(D_y f)r\).
\end{lem}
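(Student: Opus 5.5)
The plan is to linearize $f$ at the base point $y$, choosing $r_0$ uniformly over $E$ by compactness, and then to cover the image of the linearized ball, which is an ellipsoid, by a grid of small cubes. First I fix the constants. Since $E$ is compact and $U$ is open, pick $\rho>0$ such that the closed $3\rho$-neighbourhood $E'$ of $E$ is a compact subset of $U$. Non-degeneracy and continuity of $w\mapsto D_wf$ give
\[
c:=\min_{y\in E}\alpha_d(D_yf)>0 .
\]
By uniform continuity of $D f$ on $E'$, choose $0<r_0\le\rho$ such that $\|D_wf-D_yf\|\le c/2$ whenever $y\in E$ and $|w-y|\le 2r_0$. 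Then $r_0$ depends only on $f$ and $E$, and not on $y$, $z$ or $r$.

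Next comes the linearization. Take $y\in E$, $z\in B(y,r_0)$ and $0<r<r_0$, and set $A=D_yf$. The ball $B(z,r)$ is convex and contained in $B(y,2r_0)\subset E'\subset U$. For $x\in B(z,r)$ the mean value identity gives
\[
f(x)-f(z)-A(x-z)=\int_0^1\bigl(D_{z+t(x-z)}f-A\bigr)(x-z)\,dt .
\]
The norm of the right-hand side is at most $\tfrac{c}{2}r\le\tfrac12\alpha_{k+1}(A)r$. Hence
\[
f(B(z,r))\subset f(z)+A(B(0,r))+B\bigl(0,\tfrac12\alpha_{k+1}(A)r\bigr).
\]
It therefore suffices to cover the ellipsoid $A(B(0,r))$ by balls of radius $\tfrac12\alpha_{k+1}(A)r$: enlarging each such ball by $\tfrac12\alpha_{k+1}(A)r$ yields the required balls of radius $\alpha_{k+1}(A)r$.

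For the counting step, write $\alpha_i=\alpha_i(A)$. The ellipsoid $A(B(0,r))$ has semi-axes $\alpha_ir$ along orthonormal directions, so it lies in a rectangular box with side lengths $2\alpha_ir$. I subdivide this box into cubes of side $\alpha_{k+1}r/\sqrt d$. Each such cube has diameter $\alpha_{k+1}r$ and so lies in a ball of radius $\tfrac12\alpha_{k+1}r$. The number of cubes needed along direction $i$ is $\lceil 2\sqrt d\,\alpha_i/\alpha_{k+1}\rceil$.
\begin{itemize}
\item For $i\le k$ we have $\alpha_i\ge\alpha_{k+1}$, so the argument of the ceiling is at least $1$. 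Using $\lceil x\rceil\le 2x$ for $x\ge1$, this count is at most $4\sqrt d\,\alpha_i/\alpha_{k+1}\le 4d\,\alpha_i/\alpha_{k+1}$.
\item For $i>k$ we have $\alpha_i\le\alpha_{k+1}$, so the count is at most $\lceil 2\sqrt d\rceil\le 4d$.
\end{itemize}
Multiplying over the $d$ directions gives at most
\[
(4d)^d\frac{\alpha_1\cdots\alpha_k}{\alpha_{k+1}^k}=(4d)^d\frac{\phi^k(A)}{\alpha_{k+1}(A)^k}
\]
balls, which is the claimed bound. For $k=0$ the product over $i\le k$ is empty and the bound reads $(4d)^d$, which is consistent.

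The only delicate point is the uniformity of $r_0$ in $y$. The perturbation error must be absorbed at the smallest relevant scale $\alpha_{k+1}(D_yf)r$, and this is exactly where non-degeneracy enters: it provides the uniform lower bound $c\le\alpha_d(D_yf)\le\alpha_{k+1}(D_yf)$ on $E$. With that bound in hand, the remaining steps are the routine geometry above.
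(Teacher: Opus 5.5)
Your proof is correct. The paper gives no proof of this lemma and simply quotes it from Feng and Simon. Your argument is the standard one behind that result, and it supplies the details cleanly.

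First, you choose $r_0$ from the uniform continuity of $Df$ on a compact neighbourhood of $E$. This makes the linearization error on $B(z,r)$ at most $\tfrac12\min_{E}\alpha_d(D_yf)\,r\le\tfrac12\alpha_{k+1}(D_yf)\,r$. You then cover the ellipsoid $D_yf(B(0,r))$ by a grid of cubes of side $\alpha_{k+1}(D_yf)\,r/\sqrt d$. The number of cubes is $\prod_i\lceil 2\sqrt d\,\alpha_i/\alpha_{k+1}\rceil\le(4d)^d\phi^k(D_yf)/\alpha_{k+1}(D_yf)^k$.

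Your write-up also makes explicit that $r_0$ depends only on $\min_E\alpha_d(D_yf)$ and on the modulus of continuity of $Df$ near $E$. This is the uniformity the paper needs when it applies the lemma with $E=K$ to the finitely many maps $f_i$ and takes the smallest $r_0$.
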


For the remainder of this section, fix $k\in\{0,1,\ldots,d-1\}$.  Define functions $G,H:\Sigma\to(0,\infty)$ by
\begin{equation}\label{eq:GH}
 G(\omega):=
 \frac{(4d)^d\phi^k\!\left(D_{\Pi(\sigma\omega)}f_{\omega_1}\right)}
 {\alpha_{k+1}\!\left(D_{\Pi(\sigma\omega)}f_{\omega_1}\right)^k},
 \qquad
 H(\omega):=\alpha_{k+1}\!\left(D_{\Pi(\sigma\omega)}f_{\omega_1}\right).
\end{equation}
Note that $G,H$ are continuous, $G(\omega)\geq (4d)^d\geq 1$, and
\[
0<H_-:=\min_{\omega\in\Sigma}H(\omega)
\leq H(\omega)\leq\gamma<1.
\]
For $n\geq1$, let
\begin{equation*}
 G_n(\omega)=\prod_{j=0}^{n-1}G(\sigma^j\omega),
 \qquad
 H_n(\omega)=\prod_{j=0}^{n-1}H(\sigma^j\omega).
\end{equation*}
We adopt the convention that $G_0=H_0=1$. 

The following lemma is an extension of
the recursive covering estimate of Feng and Simon
\cite[Proposition~4.2]{FengSimonI}.
Their result concerns images of the attractor $K$,
whereas here the set $D$ may be any compact subset of $Z$,
and its covering number $N_r(D)$ is retained explicitly.
The proof adapts their recursive argument, with constants independent
of $D$.

\begin{lem}\label{coveringlem}
There exist constants $A\geq1$ and $\rho>0$, depending only on $\Phi$, $Z$, and $k$, such that for every compact set $D\subset Z$, every $\omega\in\Sigma$, every $n\geq1$, and every $0<r\leq\rho$,
\begin{equation}\label{eq:coverFD}
 N_{A r H_n(\omega)}\bigl(f_{\omega|n}(D)\bigr)
 \leq A G_n(\omega)N_r(D).
\end{equation}
\end{lem}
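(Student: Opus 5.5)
The plan is an induction on $n$ that applies Lemma~\ref{FSlem} one map at a time, from the innermost map outward. For $m=0,1,\ldots,n$ write $D_m:=f_{\omega_{n-m+1}\cdots\omega_n}(D)$, so that $D_0=D$ and $D_{m}=f_{\omega_{n-m+1}}(D_{m-1})$. The key geometric point is this. Since every $f_i$ contracts by $\gamma$ and $Z$ is bounded, for every $x\in Z$ we have
\[
|f_{J}(x)-\Pi(\tau)|\leq\gamma^{|J|}\diam Z
\qquad\text{whenever } \tau\in[J].
\]
Hence every point of $f_{\omega_{j+1}\cdots\omega_n}(D)$ lies within $\gamma^{n-j}\diam Z$ of $\Pi(\sigma^{j}\omega)$. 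This is the base point $y=\Pi(\sigma^{j}\omega)$ at which the derivative appears in $G(\sigma^{j-1}\omega)$ and $H(\sigma^{j-1}\omega)$. So once $n-j\geq L$, where $L$ is fixed with $\gamma^{L}\diam Z$ small, the relevant pieces of the image sit inside a small neighbourhood of $K$.

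First I choose the constants. Apply Lemma~\ref{FSlem} to each $f_i$ with $E=K$, or with $E$ a compact neighbourhood of $K$ in $U$, and take the minimum $r_0$ over $i$. Then choose $L$ with $\gamma^L\diam Z\leq r_0/2$. For the first $L$ steps I do not use Lemma~\ref{FSlem}. Instead I use the crude bound that a $\gamma$-Lipschitz map sends an $r$-ball into a $\gamma r$-ball, so
\[
N_{r}(f_J(D))\leq N_r(D)\qquad(|J|\leq L).
\]
The discrepancy is absorbed into $A$. The scale $rH_{L}$ versus $r$ costs a factor at most $H_-^{-L}$ in radius. The number of balls of radius $cr$ needed to cover a ball of radius $r$ is bounded by a constant depending on $c$ and $d$. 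Finally, $G\geq1$. When $n\leq L$ this already proves \eqref{eq:coverFD}, with $A$ depending only on $L$, $H_-$ and $d$.

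For the inductive step with $n-j\geq L$, suppose $f_{\omega_{j+1}\cdots\omega_n}(D)$ is covered by $N$ balls of radius $R$ with $R<r_0$. Discard any ball not meeting the set, and re-centre each remaining ball at a point of the set, doubling the radius if needed. Every centre $z$ then lies within $r_0$ of $y=\Pi(\sigma^{j}\omega)$. Lemma~\ref{FSlem} then covers $f_{\omega_j}(B(z,R))$ by $G(\sigma^{j-1}\omega)$ balls of radius $H(\sigma^{j-1}\omega)R$. Iterating, the radius after all $n$ steps is
\[
r\,H_{n-L}(\omega)\cdot(\text{bounded factor})\leq r\gamma^{\,n-L}\cdot(\text{bounded factor}),
\]
which stays below $r_0$ once $\rho$ is chosen small. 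The counts multiply to $G_{n-L}(\omega)$ times the initial count. That initial count is at most $C\,N_r(D)$.

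The main obstacle is the radius doubling. If the set is re-centred at every step, the factor $2$ accumulates into $2^n$, which is not allowed. I would avoid this in the following way. Lemma~\ref{FSlem} permits any centre $z\in B(y,r_0)$, not necessarily a point of the set. So I keep the centres that the lemma itself produces. I then need to check that these centres stay within $r_0$ of the current base point. Each new centre lies in the image of an old ball under $f_{\omega_j}$ (or can be taken to), hence within $\gamma(\text{old distance})+\gamma R$ of $f_{\omega_j}(y_{\text{old}})=y_{\text{new}}$. This distance recursion stays bounded by $r_0$, given that the distances start at most $r_0/2$ after the first $L$ steps and that $R\leq\rho$ is small. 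Re-centring is then needed only once, at step $L$, and it costs only a constant. If the cover given by Lemma~\ref{FSlem} has centres far away, I discard the balls not meeting the image. The remaining centres are within $H R\leq \gamma R$ of the image, and that image lies within $\gamma^{n-j+1}\diam Z$ of the base point. This gives the same bound without any radius inflation.

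The constants $A,\rho$ depend only on $r_0$, $L$, $H_-$, $d$ and $\diam Z$, and so they are independent of $D$.
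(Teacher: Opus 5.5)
Your proposal is correct and is essentially the paper's argument. The paper likewise handles the innermost $n_0$ maps with the crude Lipschitz bound, absorbing the factor $2(\gamma/H_-)^{n_0}$ into $A$. It then inducts on $n$ via $\sigma\omega$, applying Lemma~\ref{FSlem} to $f_{\omega_1}$ at the base point $y=\Pi(\sigma\omega)$ after discarding the balls that do not meet $f_{(\sigma\omega)|n}(D)$, so that all centres lie within $R+\gamma^n\diam Z<r_0$ of $y$. That is exactly your final ``discard rather than re-centre'' fix; the tentative recursion tracking the lemma-produced centres is not needed.
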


\begin{proof}
The case $D=\emptyset$ is immediate, so assume
that $D\neq\emptyset$.
Applying Lemma~\ref{FSlem} to the mappings $f_i$ with $E=K$, we see that there exists $r_0>0$ such that, for any $y\in K$, $z\in B(y,r_0)$ and $0<r<r_0$ and $i\in\{1,\ldots,\ell\}$, the set $f_i(B(z,r))$ can be covered by 
\[
(4d)^d \cdot \frac{\phi^k(D_y f_i)}{(\alpha_{k+1}(D_y f_i))^k}
\]
balls of radius \(\alpha_{k+1}(D_y f_i)r\).
By shrinking $r_0$ if necessary,  we may also assume that the $2r_0$-neighbourhood of $K$ is contained in $U$.

Let $\gamma\in(0,1)$ be the common contraction constant of $\Phi=\{f_i\}_{i=1}^{\ell}$ in \eqref{eq:contraction}. 
Then $H(\omega)\leq\gamma$ for every
$\omega\in\Sigma$.
Choose an integer $n_0\geq1$ such that
\[\gamma^{n_0}\max\{1,\diam Z\}<\frac{r_0}{4}.\]
Next, choose $A\geq1$ sufficiently large that
\[
A\geq
2\max_{0\leq j\leq n_0}
\left(\frac{\gamma}{H_-}\right)^j,
\]
and then choose $\rho>0$ so small that
\begin{equation}\label{eq:rho}
	A\rho\leq1,
	\qquad
	2\rho<\dist(Z,U^c).
\end{equation}
The constants $A$ and $\rho$ depend only on $\Phi, Z$ and $k$, and remain fixed throughout the proof.

Fix a compact set $D\subset Z$ and $0<r\leq\rho$.
We prove \eqref{eq:coverFD} for every $\omega\in\Sigma$
by induction on $n$.

First suppose that $1\leq n\leq n_0$. Let $N=N_r(D)$ and let $B_1,\ldots, B_N$ be closed balls of radius $r$ whose union covers $D$. They all meet $D$ by the minimality of $N_r(D)$.  For each $i\in \{1,\ldots,N\}$, pick $z_i\in B_i\cap D$ and let $\widetilde{B}_i:=B(z_i,2r)$. Then $D\subset \bigcup_{i=1}^N\widetilde{B}_i$, and $\widetilde{B}_i$ are all contained in $U$ by \eqref{eq:rho}.
Moreover, by \eqref{eq:contraction},
\[
f_{\omega|n}(B(z_i,2r))
\subset
B\bigl(f_{\omega|n}(z_i),2\gamma^n r\bigr), \qquad i=1,\ldots, N.
\]
Thus the balls on the right-hand side form a cover of $f_{\omega|n}(D)$.  
By the choice of $A$ and the inequality
$H_n(\omega)\geq H_-^n$, we have
\[
2\gamma^n r
\leq ArH_-^n
\leq ArH_n(\omega).
\]
Consequently,
\[
N_{ArH_n(\omega)}\bigl(f_{\omega|n}(D)\bigr)
\leq N_r(D)
\leq AG_n(\omega)N_r(D),
\]
where the last inequality follows from $A\geq1$ and $G_n(\omega)\geq 1$.
Thus \eqref{eq:coverFD} holds for
$1\leq n\leq n_0$.

Now suppose that, for some $n\geq n_0$,
\eqref{eq:coverFD} holds
for every $\omega\in\Sigma$.
Fix $\omega\in\Sigma$ and set
$E=f_{(\sigma\omega)|n}(D)$
and 
$y=\Pi(\sigma\omega)$.
Applying the induction hypothesis to $\sigma\omega$,
we obtain a cover of $E$ by $M\leq AG_n(\sigma\omega)N_r(D)$ balls $B_1,\ldots,B_M$
of radius
$R=ArH_n(\sigma\omega)$.
Discarding any balls which do not meet $E$, we assume that $B_i\cap E\neq\emptyset$ for $1\leq i\leq M$.
By \eqref{eq:rho} and the choice of $n_0$,
\[
R
\leq A\rho\gamma^n
\leq\gamma^{n_0}
<\frac{r_0}{2}.
\]

Note that $y=f_{(\sigma\omega)|n}
\bigl(\Pi(\sigma^{n+1}\omega)\bigr)$. Also, for every $x\in E$, there exists $z\in D$ such that $x=f_{(\sigma\omega)|n}(z)$. 
Since both $z$ and $\Pi(\sigma^{n+1}\omega)$ belong to $Z$,
we have
\[
\begin{split}
	|x-y|
	&=
	\left|
	f_{(\sigma\omega)|n}(z)
	-
	f_{(\sigma\omega)|n}
	\bigl(\Pi(\sigma^{n+1}\omega)\bigr)
	\right|\\
	&\leq\gamma^n\diam Z
	\leq\gamma^{n_0}\diam Z
	<\frac{r_0}{2}.
\end{split}
\]
Since $B_i$ meets $E$, the distance between its center and $y$ is at most
\[
R+\gamma^n\diam Z<r_0. 
\]
In particular, each $B_i$ is contained in $U$.

We may therefore apply Lemma~\ref{FSlem} to $f_{\omega_1}$
on each $B_i$, using the reference point
$y=\Pi(\sigma\omega)$.
It follows that $f_{\omega_1}(B_i)$ can be covered by
at most $G(\omega)$ balls of radius
\[
H(\omega)R
=
ArH(\omega)H_n(\sigma\omega)
=
ArH_{n+1}(\omega).
\]
Since
\[
f_{\omega|n+1}(D)
=
f_{\omega_1}(E)
\subset
\bigcup_{i=1}^{M}f_{\omega_1}(B_i),
\]
we conclude that
\[
\begin{split}
	N_{ArH_{n+1}(\omega)}
	\bigl(f_{\omega|n+1}(D)\bigr)
	&\leq G(\omega)M\\
	&\leq
	A G(\omega)G_n(\sigma\omega)N_r(D)\\
	&=AG_{n+1}(\omega)N_r(D).
\end{split}
\]
This completes the induction.
\end{proof}

\section{Proof of  Theorem~\ref{thm:main}}\label{sec:mainproof}

\subsection{An upper bound via additive pressure}\label{sec:additive-bound}

In this subsection, we derive an upper bound for $\overline{\dim}_{\rm B}K_C$ in terms of $\overline{\dim}_{\rm B}C$ and the zero of an additive pressure function. 

Fix $k\in\{0,1,\ldots,d-1\}$ and let $G,H$ be the functions defined in \eqref{eq:GH}.  Put
\[
 g=\log G,
 \qquad
 h=\log H.
\]
Then $g,h:\Sigma\to \R$ are continuous functions with $g\geq d\log(4d)>0$ and $h<0$. The pressure considered below is the ordinary topological pressure
of the continuous potential $g+th$, whose Birkhoff sums satisfy
\[
S_n(g+th)=\log G_n+t\log H_n.
\] 
By the standard properties of topological pressure~\cite[Theorem 9.7]{Walters82}, the function
$t\mapsto P_\sigma(g+th)$, $t\in [0,\infty)$, 
is continuous and strictly decreasing from a positive value to $-\infty$ as $t$ ranges from $0$ to $+\infty$.  Let $q>0$ be its unique zero:
\begin{equation*}
 P_\sigma(g+qh)=0.
\end{equation*}

Recall that $ H_n(\omega)=\prod_{j=0}^{n-1}H(\sigma^j\omega)$. 
For $n\in\N$ and $I\in\mathcal{A}^n$, define
\[
\widetilde H(I)=\sup_{\omega\in[I]}H_n(\omega).\]
For the empty word, 
set $\widetilde H(\vartheta)=1$. 
For $0<\delta<1$, let
\[\mathcal{S}_\delta=\bigl\{I\in\mathcal{A}^*:\widetilde H(I)\geq\delta\bigr\},\qquad 
\mathcal{T}_\delta=\bigl\{I\in\mathcal{A}^*:\widetilde H(I)<\delta\leq\widetilde H(I^-)\bigr\}.\]
The following elementary decomposition separates the condensation
orbit pieces before and after their first crossing of the scale
$\delta$.

\begin{lem}\label{lem:decomposition}
	For every $0<\delta<1$, the families $\mathcal{S}_\delta$ and
	$\mathcal T_\delta$ are finite, and the cylinders
	$[I]$, $I\in\mathcal T_\delta$, form a partition of $\Sigma$.
	Moreover,
	\begin{equation}\label{eqKCcoverST}
		K_C
		\subset
		C\cup\bigcup_{I\in\mathcal{S}_\delta}f_I(C)
		\cup\bigcup_{I\in\mathcal T_\delta}f_I(Z).
	\end{equation}
\end{lem}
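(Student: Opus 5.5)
The plan is to use only three facts: $\widetilde H$ decreases along words, it decays geometrically in the length, and the orbital representation \eqref{eqOrbital} holds.

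\emph{Monotonicity and decay.} For $I\in\mathcal A^n$ with $n\ge1$ and $\omega\in[I]$ we have $H_n(\omega)=H(\omega)H_{n-1}(\sigma\omega)$. Hence $H_n(\omega)\le \gamma^n$, and if $n\ge 2$ then also $H_n(\omega)\le H_{n-1}(\omega)$, because $H_n(\omega)=H_{n-1}(\omega)H(\sigma^{n-1}\omega)$ and $H\le\gamma<1$. Since $[I]\subset[I^-]$, taking suprema gives
\[
\widetilde H(I)\le \widetilde H(I^-),\qquad \widetilde H(I)\le\gamma^{|I|},
\]
where the first inequality also holds for $|I|=1$ by the convention $\widetilde H(\vartheta)=1$.

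\emph{Finiteness.} If $\widetilde H(I)\ge\delta$, then $\gamma^{|I|}\ge\delta$, so $|I|\le \log\delta/\log\gamma$. This bounds the length of every word in $\mathcal S_\delta$. Every $I\in\mathcal T_\delta$ satisfies $I^-\in\mathcal S_\delta\cup\{\vartheta\}$, so $|I|\le 1+\log\delta/\log\gamma$. Both families therefore consist of words of bounded length over a finite alphabet, and so they are finite.

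\emph{Partition.} Fix $\omega\in\Sigma$ and consider the sequence $a_n=\widetilde H(\omega|n)$, with $a_0=\widetilde H(\vartheta)=1\ge\delta$. By the two facts above, $a_n$ is non-increasing and tends to $0$. So there is a unique $n\ge1$ with $a_n<\delta\le a_{n-1}$. This says $\omega|n\in\mathcal T_\delta$ and that $n$ is the only length at which this happens. Hence every $\omega$ lies in exactly one cylinder $[I]$ with $I\in\mathcal T_\delta$.

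Distinct words in $\mathcal T_\delta$ give disjoint cylinders. Indeed, if $I\ne J$ and $[I]\cap[J]\neq\emptyset$, one word is a proper prefix of the other, say $J=IJ'$ with $J'$ non-empty. Then $J^-$ extends $I$ (possibly $J^-=I$), so monotonicity gives $\widetilde H(J^-)\le \widetilde H(I)<\delta$, contradicting $J\in\mathcal T_\delta$.

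\emph{Covering.} Use \eqref{eqOrbital}. For $K$, each point $\Pi(\omega)$ with $\omega\in[I]$ and $I\in\mathcal T_\delta$ equals $f_I(\Pi(\sigma^{|I|}\omega))\in f_I(K)\subset f_I(Z)$. The set $C$ is listed explicitly in \eqref{eqKCcoverST}.

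Now take a non-empty word $J$ and consider $f_J(C)$. If $J\in\mathcal S_\delta$, then $f_J(C)$ appears in the union directly. Otherwise $\widetilde H(J)<\delta$. Let $I$ be the shortest prefix of $J$ (non-empty, possibly $J$ itself) with $\widetilde H(I)<\delta$; by minimality $\widetilde H(I^-)\ge\delta$, so $I\in\mathcal T_\delta$. Write $J=IJ'$. Then
\[
f_J(C)=f_I(f_{J'}(C))\subset f_I(Z),
\]
since every $f_i$ maps $Z$ into $Z$ and $C\subset Z$. This proves \eqref{eqKCcoverST}.

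The only step needing care is the monotonicity of $\widetilde H$ along prefixes. It drives both the uniqueness in the partition and the existence of the first-crossing prefix, and it depends on $H\le\gamma<1$ together with the cylinder inclusion $[I]\subset[I^-]$.
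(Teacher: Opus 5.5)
Your proposal is correct and follows essentially the same route as the paper. Geometric decay and prefix-monotonicity of $\widetilde H$ give bounded word lengths and a unique first-crossing prefix, and the covering then follows from the orbital representation by splitting each word at that prefix; your explicit check of $\widetilde H(I)\le\widetilde H(I^-)$ simply spells out the uniqueness that the paper derives from $\widetilde H(Ii)\le\gamma\widetilde H(I)$.
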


\begin{proof}
	Fix $0<\delta<1$. 
	Since $H\leq \gamma<1$, for every word $I\in \mathcal{A}^*$ and every
	$i\in\mathcal{A}$, we have $\widetilde H(Ii)\leq \gamma\widetilde H(I)$ and $\widetilde H(I)\leq \gamma^{|I|}$. 
	Choose $N\geq1$ such that $\gamma^N<\delta$.
Then every $\omega\in\Sigma$ has a unique prefix
	$I\in\mathcal T_\delta$, of length at most $N$.
	Also, every word in $\mathcal{S}_\delta$ has length less than $N$.
	Therefore, both $\mathcal{S}_{\delta}$ and $\mathcal{T}_{\delta}$ are finite, and the cylinders $[I]$, $I\in\mathcal{T}_{\delta}$, form a partition of $\Sigma$.
	
	By the orbital representation of $K_C$ (cf. \eqref{eqOrbital}), it remains to show that the right-hand side of \eqref{eqKCcoverST} covers $K$ and
	the sets $f_J(C)$, $J\in\mathcal{A}^*$.
	If $\widetilde H(J)\geq\delta$, then $J\in\mathcal{S}_\delta$.
	Otherwise, write $J=J_1J_2$, where $J_1$ is the unique prefix of $J$ in $\mathcal T_\delta$ and $J_2$ may be empty.
	Since $f_{J_2}(C)\subset Z$,
	\[
	f_J(C)=f_{J_1}(f_{J_2}(C))\subset f_{J_1}(Z).
	\]
	Finally, for $x\in K$, choose $\omega\in\Sigma$
	with $\Pi(\omega)=x$, and let $I$
	be its prefix in $\mathcal T_\delta$.
	Then
	\[
	x=f_I\bigl(\Pi(\sigma^{|I|}\omega)\bigr)
	\in f_I(K)\subset f_I(Z).
	\]
	This proves \eqref{eqKCcoverST}.
\end{proof}

We now establish the additive-pressure upper bound.

\begin{prop}\label{prop:additive-upper}
For every compact condensation set $C\subset Z$,
\begin{equation*}
 \overline{\dim}_{\rm B}K_C\leq\max\bigl\{q,\overline{\dim}_{\rm B}C\bigr\}.
\end{equation*}
\end{prop}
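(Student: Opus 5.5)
Fix $s>\max\{q,\overline{\dim}_{\rm B}C\}$ and choose $t$ with $\max\{q,\overline{\dim}_{\rm B}C\}<t<s$. The plan is to show that $N_{\delta}(K_C)\lesssim \delta^{-s}$ for all small $\delta$, using the decomposition of Lemma~\ref{lem:decomposition} at scale $\delta$. By \eqref{eq:BoxCoverPower} there is $M$ with $N_r(C)\leq Mr^{-t}$ for $0<r\leq\rho$. The term $C$ contributes at most $M\delta^{-t}$. For each word $I$ in $\mathcal S_\delta$ or $\mathcal T_\delta$ I will apply Lemma~\ref{coveringlem} with a suitable initial radius $r$, and then sum over words using the pressure. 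The key quantity is
\[
\Sigma_t(\delta):=\sum_{I\in\mathcal S_\delta\cup\mathcal T_\delta}\widetilde G(I)\widetilde H(I)^{t},\qquad \widetilde G(I):=\sup_{\omega\in[I]}G_{|I|}(\omega).
\]
Since $t>q$, we have $P_\sigma(g+th)<0$. Hence $\sum_{I\in\mathcal A^n}\sup_{[I]}\exp S_n(g+th)\leq Ce^{-\epsilon n}$. Replacing $\sup G\cdot(\sup H)^t$ by $\sup(G H^t)$ costs a factor $e^{o(n)}$, which is harmless, because $g,h$ are continuous and hence the variation of $S_ng, S_nh$ on $n$-cylinders is $o(n)$. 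I will absorb this factor into the exponential decay. Summing over all lengths, $\sum_{I\in\mathcal A^*}\widetilde G(I)\widetilde H(I)^t<\infty$.

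For $I\in\mathcal T_\delta$, take $D=Z$ (or $D=K$) and $r=\rho$. Lemma~\ref{coveringlem} gives $N_{A\rho H_n(\omega)}(f_I(Z))\leq AG_n(\omega)N_\rho(Z)$, with $H_n(\omega)\leq\widetilde H(I)<\delta$. So $f_I(Z)$ is covered by $\lesssim\widetilde G(I)$ balls of radius $\lesssim\delta$. For $I\in\mathcal T_\delta$ we also have $\widetilde H(I)\geq H_-\widetilde H(I^-)\geq H_-\delta$. Therefore $\widetilde G(I)\leq (H_-\delta)^{-t}\widetilde G(I)\widetilde H(I)^t$, and the total contribution is $\lesssim\delta^{-t}\sum\widetilde G\widetilde H^t\lesssim\delta^{-t}$. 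For $I\in\mathcal S_\delta$, take $D=C$ and $r=\rho\delta/\widetilde H(I)\leq\rho$, choosing any $\omega\in[I]$. Then $f_I(C)$ is covered by $AG_n(\omega)N_r(C)\leq A\widetilde G(I)M\rho^{-t}\delta^{-t}\widetilde H(I)^t$ balls of radius $A\rho\delta H_n(\omega)/\widetilde H(I)\leq A\rho\delta$. Summing over $\mathcal S_\delta$ again gives $\lesssim\delta^{-t}\sum\widetilde G\widetilde H^t$.

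Combining the pieces, $N_{c\delta}(K_C)\lesssim\delta^{-t}$ with constants independent of $\delta$. This gives $\overline{\dim}_{\rm B}K_C\leq t$, and letting $t\downarrow\max\{q,\overline{\dim}_{\rm B}C\}$ proves the proposition; the auxiliary $s$ is not even needed. The main obstacle I anticipate is the summability step. The sup of the product versus the product of sups has to be handled via uniform continuity of $g,h$, so that the $e^{o(n)}$ error is absorbed by the strict negativity of $P_\sigma(g+th)$. I will also check that Lemma~\ref{coveringlem} is applied with a single $\omega\in[I]$ while the bounds use the suprema $\widetilde G,\widetilde H$; this direction of inequality is fine.
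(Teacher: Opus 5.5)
\textbf{Gap in the $\mathcal T_\delta$ step.} Your inequality $\widetilde H(I)\geq H_-\widetilde H(I^-)$ does not hold in general. The function $H_{n-1}(\omega)=\prod_{j=0}^{n-2}\alpha_{k+1}\bigl(D_{\Pi(\sigma^{j+1}\omega)}f_{\omega_{j+1}}\bigr)$ is not determined by $\omega|(n-1)$. The base points $\Pi(\sigma^{j+1}\omega)$ depend on the whole tail of $\omega$, including $\omega_n$. All you can say is $\widetilde H(I)\geq H_-\sup_{[I]}H_{n-1}$. Since $[I]\subsetneq[I^-]$, this supremum can be strictly smaller than $\widetilde H(I^-)=\sup_{[I^-]}H_{n-1}$.

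For instance, with $d=1$ one has $\widetilde H(1)=\max_{x\in K}|f_1'(x)|$ and $\widetilde H(12)=\max_{y\in K}|f_1'(f_2(y))|\,|f_2'(y)|$. Suppose $f_2$ is affine with $|f_2'|\leq|f_1'|$ on $K$ (so $H_-=|f_2'|$), and $|f_1'|$ attains its maximum on $K$ only outside $f_2(K)$. Then for $\delta=\widetilde H(1)$ the word $12$ belongs to $\mathcal T_\delta$, but $\widetilde H(12)<H_-\delta$. Hence your bound $\widetilde G(I)\leq(H_-\delta)^{-t}\widetilde G(I)\widetilde H(I)^t$ is unjustified. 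This is the same tail dependence you correctly anticipated for the product of suprema. It also enters the stopping-time lower bound, and there you did not account for it.

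\textbf{Repair.} The fix is exactly what the paper does. Let $\Delta_{n-1}(h)$ be the maximal oscillation of $S_{n-1}h$ on $(n-1)$-cylinders. For $n\geq2$ and any $\omega\in[I]\subset[I^-]$, one has $\log H_{n-1}(\omega)\geq\log\widetilde H(I^-)-\Delta_{n-1}(h)\geq\log\delta-\Delta_{n-1}(h)$. Therefore $\widetilde H(I)\geq H_-\delta e^{-\Delta_{n-1}(h)}$, and so $\widetilde G(I)\leq(H_-\delta)^{-t}e^{t\Delta_{n-1}(h)}\widetilde G(I)\widetilde H(I)^t$. Since $\Delta_m(h)=o(m)$ and $P_\sigma(g+th)<0$, the extra factor is absorbed just like your other $e^{o(n)}$ loss: $\sum_{n}e^{t(\Delta_{n-1}(h)+\Delta_n(h))}\sum_{I\in\mathcal A^n}\sup_{[I]}\exp S_n(g+th)<\infty$. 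With this correction your argument is essentially the paper's proof.

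\textbf{Minor points.} On $\mathcal S_\delta$ the paper picks $\omega_I\in[I]$ attaining $\widetilde H(I)$. This gives $G_{|I|}(\omega_I)H_{|I|}(\omega_I)^t\leq\sup_{[I]}\exp S_{|I|}(g+th)$ with no oscillation loss; your product-of-suprema bound costs a factor $e^{t\Delta_n(h)}$, which is harmless. Separately, the alternative $D=K$ you mention for $\mathcal T_\delta$ would not suffice. These pieces must cover the sets $f_{J_1}(f_{J_2}(C))$ from Lemma~\ref{lem:decomposition}, which need not lie in $f_{J_1}(K)$, so you need $D=Z$.
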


\begin{proof}
Write
$c=\overline{\dim}_{\mathrm B}C$
and fix
$t>\max\{q,c\}$.
Then
$P_\sigma(g+th)<0$.
Consequently, there exist constants $B\geq1$ and $\eta>0$ such that
\begin{equation}\label{eq:Znt}
 Z_n(t):=
 \sum_{I\in\mathcal{A}^n}
 \exp\left(\sup_{\omega\in[I]}S_n(g+th)(\omega)\right)
 \leq Be^{-\eta n}
 \qquad(n\geq1).
\end{equation}

Fix $0<\delta<1$. 
We use the decomposition \eqref{eqKCcoverST}
from Lemma~\ref{lem:decomposition}.
Let $A$ and $\rho$ be supplied by Lemma~\ref{coveringlem}, and set
\[r_*=\min\left\{\rho,\frac{1}{2A}\right\}.\]
By \eqref{eq:BoxCoverPower}, there exists a constant $M=M(C,t,r_*)>0$ such that
\begin{equation}\label{eq:NrC}
 N_r(C)\leq Mr^{-t}
 \qquad(0<r\leq r_*).
\end{equation}
Below we estimate the contributions from $\mathcal S_\delta$
and $\mathcal T_\delta$ separately.

We first consider the words in $\mathcal S_\delta$.  For each $I\in\mathcal{S}_\delta$, choose $\omega_I\in[I]$ with
$H_{|I|}(\omega_I)=\widetilde H(I)$
and put
$r_I=\delta r_*/\widetilde H(I)$.
Then $0<r_I\leq r_*$, and Lemma~\ref{coveringlem} gives a cover of $f_I(C)$ by at most $AG_{|I|}(\omega_I)N_{r_I}(C)$ balls of radius
\[
 A r_I H_{|I|}(\omega_I)
 =Ar_*\delta\leq\frac{\delta}{2}.
\]
Using \eqref{eq:NrC}, we obtain
\[
\begin{aligned}
 N_\delta(f_I(C))
 &\leq A G_{|I|}(\omega_I)N_{r_I}(C)\\
 &\leq AMr_*^{-t}\delta^{-t}
     G_{|I|}(\omega_I)H_{|I|}(\omega_I)^t\\
 &\leq AMr_*^{-t}\delta^{-t}
 \exp\left(\sup_{\omega\in[I]}S_{|I|}(g+th)(\omega)\right).
\end{aligned}
\]
Summing over $\mathcal{S}_\delta$, then over all word lengths, and using \eqref{eq:Znt}, gives
\begin{equation}\label{eq:coverSdelta}
 N_{\delta}\left(\bigcup_{I\in\mathcal{S}_\delta}f_I(C)\right)\leq \sum_{I\in\mathcal{S}_\delta}N_\delta(f_I(C))
 \leq C_t\delta^{-t}
\end{equation}
for a constant $C_t$ independent of $\delta$.

We next consider the words in $\mathcal T_\delta$. For $n\in\N$,  define
\[
 \Delta_n(h)
 =\sup\bigl\{|S_nh(\omega)-S_nh(\tau)|: \omega,\tau\in\Sigma,
             \omega|n=\tau|n\bigr\}.
\]
Set $\Delta_0(h)=0$. 
Since $h$ is continuous, we assert that 
\begin{equation}\label{eq:Delta-nh}
 \Delta_n(h)=o(n).
\end{equation}
Indeed,  for $m\in\N$, let 
\[
 {\rm var}_m(h)=\sup\{|h(\omega)-h(\tau)|: \omega,\tau\in \Sigma, \omega|m=\tau|m\}.
\]
Then ${\rm var}_m(h)\to0$ and thus by  Ces\`aro convergence
\[
 \Delta_n(h)\leq\sum_{m=1}^{n}{\rm var}_m(h)=o(n),
\]
proving the assertion. 

Fix $I\in\mathcal{T}_\delta$ with $|I|=n$, and choose
$\omega_I\in[I]$.
For $n\geq2$, since $\omega_I\in[I^-]$, the definition of
$\Delta_{n-1}(h)$ gives
\begin{align*}
	\log H_{n-1}(\omega_I)
	&\geq
	\sup_{\tau\in[I^-]}\log H_{n-1}(\tau)-\Delta_{n-1}(h)\\
	&=\log\widetilde H(I^-)-\Delta_{n-1}(h)\\
	&\geq\log\delta-\Delta_{n-1}(h).
\end{align*}
Thus $H_{n-1}(\omega_I)
\geq \delta e^{-\Delta_{n-1}(h)}$.
For $n=1$, the same inequality follows from
$H_0=1$, $\Delta_0(h)=0$, and $\delta<1$.
Consequently,
\begin{equation}\label{eq:HnOmegaI}
	H_n(\omega_I)
	=H_{n-1}(\omega_I)H(\sigma^{n-1}\omega_I)
	\geq H_-\delta e^{-\Delta_{n-1}(h)}.
\end{equation}

Applying Lemma~\ref{coveringlem} with $D=Z$ and initial radius $r_*$ gives a cover of $f_I(Z)$ by at most
$A N_{r_*}(Z)G_n(\omega_I)$
balls of radius
\[
 Ar_*H_n(\omega_I)<Ar_*\delta\leq\frac{\delta}{2}.
\]
Moreover, by \eqref{eq:HnOmegaI},
\[
\begin{aligned}
 G_n(\omega_I)
 &=H_n(\omega_I)^{-t}
   \exp\bigl(S_n(g+th)(\omega_I)\bigr)\\
 &\leq H_-^{-t}\delta^{-t}e^{t\Delta_{n-1}(h)}
 \exp\left(\sup_{\omega\in[I]}S_n(g+th)(\omega)\right).
\end{aligned}
\]

By \eqref{eq:Delta-nh}, there exists $N_t\geq1$ such that
\[
t\Delta_{n-1}(h)\leq\frac{\eta n}{2}
\qquad(n\geq N_t).
\]
Combining this with \eqref{eq:Znt}, we obtain
\[
e^{t\Delta_{n-1}(h)}Z_n(t)
\leq B\exp\bigl(-\eta n+t\Delta_{n-1}(h)\bigr)
\leq Be^{-\eta n/2}
\qquad(n\geq N_t).
\]
It follows that
\[
\sum_{n=1}^{\infty}e^{t\Delta_{n-1}(h)}Z_n(t)<\infty.
\]
Consequently,
\begin{equation}\label{eqNTdelta}
 N_{\delta}\left(\bigcup_{I\in\mathcal{T}_\delta}f_I(Z)\right)\leq \sum_{I\in\mathcal{T}_\delta}N_\delta(f_I(Z))\leq C_t'\delta^{-t}
\end{equation}
with $C_t'$ independent of $\delta$.

Finally, $N_\delta(C)\leq M\delta^{-t}$ for sufficiently small $\delta$.  Combining this with \eqref{eqKCcoverST}, \eqref{eq:coverSdelta}, and \eqref{eqNTdelta}, we obtain
\[
 N_\delta(K_C)\leq C_t''\delta^{-t}.
\]
Thus $\overline{\dim}_{\mathrm B} K_C\leq t$.  Letting $t\downarrow\max\{q,c\}$ proves the proposition.
\end{proof}

\subsection{Passage to the singularity dimension}\label{sec:block-reduction}

In this subsection, we prove Theorem~\ref{thm:main} by first applying Proposition~\ref{prop:additive-upper} to the block systems
and then passing to the singularity dimension.

Let $s=\dim_{\mathrm S}\Phi$.
If $s\geq d$, then there is nothing to prove since $\overline{\dim}_{\mathrm B} K_C\leq d$ always holds.  Henceforth we assume
$s<d$
and put $k=\lfloor s\rfloor$.

For $n\geq1$, consider the IFS
\[\Phi^{(n)}:=\{f_I:I\in\mathcal{A}^n\},\]
where distinct word labels are retained even when
the corresponding maps coincide.
Then $\Phi^{(n)}$ is a $C^1$ IFS on $Z$ whose homogeneous attractor is still $K$.  Define
\[
 C^{[n]}
 =\bigcup_{j=0}^{n-1}\ \bigcup_{J\in\mathcal{A}^j}f_J(C).
 \]
Iterating \eqref{eq:InhomogeneousIdentity} $n$ times gives
\[K_C=C^{[n]}\cup\bigcup_{I\in\mathcal{A}^n}f_I(K_C).\]
Thus $K_C$ is also the inhomogeneous attractor of  $\Phi^{(n)}$ with condensation set $C^{[n]}$.  Since $C^{[n]}$ is a finite union of Lipschitz images of $C$ and contains $C$, we have
\[\overline{\dim}_{\mathrm B}C^{[n]}=\overline{\dim}_{\mathrm B}C=:c.\]

Let $\tau_n$ be the left shift on $\Sigma^{(n)}:=(\mathcal{A}^n)^{\N}$,
and let $\iota_n:\Sigma^{(n)}\to\Sigma$ be the natural concatenation map.
Then $\iota_n$ is a homeomorphism satisfying
\[
\iota_n\circ\tau_n=\sigma^n\circ\iota_n.
\]
Thus $(\Sigma^{(n)},\tau_n)$ is topologically conjugate to
$(\Sigma,\sigma^n)$, and for every continuous function
$\psi:\Sigma\to\R$,
\[
P(\Sigma^{(n)},\tau_n,\psi\circ\iota_n)=P(\Sigma,\sigma^n,\psi).
\]
We henceforth identify  $(\Sigma^{(n)},\tau_n)$ with $(\Sigma,\sigma^n)$ and write $P_{\sigma^n}(\psi)=P(\Sigma, \sigma^n,\psi)$. Under this identification, the coding map of $\Phi^{(n)}$ is $\Pi$.

Put $Q=(4d)^d$. Define   $G^{(n)}, H^{(n)}:\Sigma\to (0,\infty)$ by 
\begin{equation*}
G^{(n)}(\omega)=\frac{Q\phi^k(D_{\Pi(\sigma^n\omega)}f_{\omega|n})}
          {\alpha_{k+1}(D_{\Pi(\sigma^n\omega)}f_{\omega|n})^k}, \qquad H^{(n)}(\omega)=\alpha_{k+1}(D_{\Pi(\sigma^n\omega)}f_{\omega|n}).
\end{equation*}
These are the one-step covering quantities for the block IFS
$\Phi^{(n)}$; in general, they are not the products $G_n,H_n$
introduced in Section~\ref{sec:covering}. For each fixed $n\geq 1$, they are continuous and positive, with $G^{(n)}(\omega)\geq Q>1$, and 
\begin{equation}\label{eqHnOmega}
0<H^{(n)}(\omega)\leq \|D_{\Pi(\sigma^n\omega)}f_{\omega|n}\|\leq \gamma^n<1.
\end{equation}
Consequently, by the standard properties of topological pressure~\cite[Theorem 9.7]{Walters82},
 the function $q\mapsto P_{\sigma^n}\bigl(\log G^{(n)}+q\log H^{(n)}\bigr)$, $q\in [0,\infty)$, is continuous and strictly decreasing from a positive value to $-\infty$ as $q$ ranges from $0$ to $+\infty$. 
Let $q_n>0$ be its unique zero:
\begin{equation}\label{eq:qnZero}
 P_{\sigma^n}\bigl(\log G^{(n)}+q_n\log H^{(n)}\bigr)=0.
\end{equation}
Applying Proposition~\ref{prop:additive-upper} to $\Phi^{(n)}$ and $C^{[n]}$ gives
\begin{equation}\label{eq:qnbound}
\overline{\dim}_{\mathrm B}K_C\leq\max\{c,q_n\}
 \qquad (n\geq1).
\end{equation}

To deduce Theorem~\ref{thm:main} from \eqref{eq:qnbound},
it remains to show that $q_n\to s$.
The following lemma is the full-shift specialization of
\cite[Proposition~3.1]{FengSimonI}: take
$X=\Sigma$ and $\mathcal G=\mathcal G^s$ defined in \eqref{eqGs}. In this case, the set $X^{(n)}$ in that proposition equals $\Sigma$ for every $n\geq 1$. We give a direct proof for completeness.

\begin{lem}\label{lem:block-pressure}
Let $s=\dim_{\rm S}\Phi$ and let $\mathcal{G}^s=\{g_n^s\}_{n\geq 1}$ be defined in \eqref{eqGs}. 
Then
\[\lim_{n\to\infty}\frac1nP_{\sigma^n}(g_n^s)=\inf_{n\geq 1}\frac{1}{n}P_{\sigma^n}(g_n^s)
 =P_{\sigma}(\mathcal{G}^s)=0.
\]
In particular, $P_{\sigma^n}(g_n^s)\geq 0$ for every $n\geq 1$. 
\end{lem}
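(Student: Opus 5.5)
The plan is to prove the two inequalities $\liminf_n \frac1n P_{\sigma^n}(g_n^s)\geq P_\sigma(\mathcal G^s)$ and $\frac1n P_{\sigma^n}(g_n^s)\geq P_\sigma(\mathcal G^s)$ for every $n$, together with $\limsup_n \frac1n P_{\sigma^n}(g_n^s)\leq P_\sigma(\mathcal G^s)$. These give both the existence of the limit and its identification with the infimum. The value $0$ is the definition of $s=\dim_{\rm S}\Phi$, and nonnegativity of each $P_{\sigma^n}(g_n^s)$ then follows from the infimum identity.

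\textbf{Lower bound for each $n$.} Fix $n$. Since $(\Sigma,\sigma^n)$ is conjugate to the full shift on $\mathcal A^n$ (via $\iota_n$), we have, for each $m$,
\[
P_{\sigma^n}(g_n^s)=\lim_{m\to\infty}\frac1m\log\sum_{J\in\mathcal A^{nm}}\exp\Bigl(\sup_{\omega\in[J]}\sum_{j=0}^{m-1}g_n^s(\sigma^{jn}\omega)\Bigr).
\]
By the subadditivity $g^s_{nm}(\omega)\leq\sum_{j=0}^{m-1}g_n^s(\sigma^{jn}\omega)$, each summand dominates $\sup_{[J]}\exp(g^s_{nm})$. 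Hence
\[
P_{\sigma^n}(g_n^s)\geq\lim_m\frac1m\log\sum_{J\in\mathcal A^{nm}}\sup_{[J]}e^{g^s_{nm}}=nP_\sigma(\mathcal G^s),
\]
using that the limit defining $P_\sigma(\mathcal G^s)$ exists and so may be taken along the subsequence $nm$. This yields $\inf_n\frac1nP_{\sigma^n}(g_n^s)\geq P_\sigma(\mathcal G^s)$.

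\textbf{Upper bound.} This is the main obstacle, since $\sup$ of a Birkhoff sum of $g_n^s$ is compared with $g^s_{nm}$, and subadditivity goes the wrong way. I would first bound the pressure by the crude estimate
\[
\sup_{[J]}\sum_{j}g_n^s(\sigma^{jn}\omega)\leq\sum_{j}\sup_{[J_j]}g_n^s,
\]
where $J=J_1\cdots J_m$ with $J_j\in\mathcal A^n$. Since the blocks are free in the full shift, the sum then factorises, giving
\[
P_{\sigma^n}(g_n^s)\leq\log\sum_{I\in\mathcal A^n}\sup_{[I]}e^{g_n^s}.
\]
Dividing by $n$, the right side tends to $P_\sigma(\mathcal G^s)$ by its definition. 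Hence $\limsup_n\frac1nP_{\sigma^n}(g_n^s)\leq P_\sigma(\mathcal G^s)$.

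Combined with the lower bound, this gives $\lim_n\frac1nP_{\sigma^n}(g_n^s)=\inf_n\frac1nP_{\sigma^n}(g_n^s)=P_\sigma(\mathcal G^s)=0$. In particular $P_{\sigma^n}(g_n^s)\geq nP_\sigma(\mathcal G^s)=0$ for every $n\geq1$. The only care needed is that $g_n^s$ is continuous (it is finite because $s<d$ is not needed here; each $\phi^s$ of an invertible derivative is positive), so the pressures are well defined via the formula recalled in Section~\ref{sec:preliminaries}.
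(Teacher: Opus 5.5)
Your proposal is correct and follows essentially the same route as the paper. You use subadditivity to compare the length-$nm$ partition sums of $g_{nm}^s$ with those of the Birkhoff sums of $g_n^s$ under $\sigma^n$, which gives $\frac1n P_{\sigma^n}(g_n^s)\geq P_\sigma(\mathcal G^s)$ for every $n$; the block factorisation then gives $\frac1n P_{\sigma^n}(g_n^s)\leq \frac1n\log\sum_{I\in\mathcal A^n}\sup_{[I]}e^{g_n^s}\to P_\sigma(\mathcal G^s)=0$, exactly as the paper does.
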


\begin{proof}
Recall that $g_n^s(\omega)=\log\phi^s\!\left(D_{\Pi(\sigma^n\omega)}f_{\omega|n}\right)$ and that  $\mathcal{G}^s$ is a continuous subadditive potential.
For $j\geq 1$, define
	\[
	\mathcal Z_j=\sum_{I\in\mathcal{A}^j}
	\exp\left(\sup_{\omega\in[I]}g_j^s(\omega)\right).
	\]
	For each fixed $n\geq1$, the definition of ordinary pressure on the block shift gives 
	\[
	P_{\sigma^n}(g_n^s)
	=\lim_{m\to\infty}\frac1m\log\mathcal W_{m,n},
	\]
	where 
	\[
	\mathcal W_{m,n}:=\sum_{I\in\mathcal{A}^{mn}}
	\exp\left(\sup_{\omega\in[I]}
	\sum_{r=0}^{m-1}g_n^s(\sigma^{rn}\omega)\right).
	\]
	
By subadditivity,
\[
g_{mn}^s(\omega)\leq
\sum_{r=0}^{m-1}g_n^s(\sigma^{rn}\omega),
\]
and hence $\mathcal Z_{mn}\leq\mathcal W_{m,n}$.
To bound $\mathcal W_{m,n}$ from above, write each word of length $mn$
	as $I_1\cdots I_m$, with $I_r\in\mathcal{A}^n$, $r=1,\ldots, m$. If $\omega\in[I_1\cdots I_m]$, then
	$\sigma^{(r-1)n}\omega\in[I_r]$. Therefore
	\[
	\sup_{\omega\in[I_1\cdots I_m]}
	\sum_{r=1}^m g_n^s(\sigma^{(r-1)n}\omega)
	\leq\sum_{r=1}^m\sup_{\tau\in[I_r]}g_n^s(\tau).
	\]
Exponentiating and summing
	over all $I_1,\ldots, I_m\in \mathcal{A}^n$ gives
	\[
	\mathcal W_{m,n}\leq
	\sum_{I_1,\ldots,I_m\in\mathcal{A}^n}
	\prod_{r=1}^m\exp\left(\sup_{\tau\in[I_r]}g_n^s(\tau)\right)
	=\mathcal Z_n^m.
	\]
Combining the two estimates we obtain
	\[
	\mathcal Z_{mn}\leq\mathcal W_{m,n}\leq\mathcal Z_n^m.
	\]
	Taking logarithms, dividing by $mn$, and letting $m\to\infty$ yields
	\[
	P_\sigma(\mathcal G^s)
	\leq\frac1nP_{\sigma^n}(g_n^s)
	\leq\frac1n\log\mathcal Z_n.
	\]
	
By the definition of singularity dimension, $P_\sigma(\mathcal G^s)=0$.
Thus $P_{\sigma^n}(g_n^s)\geq0$ for every $n$.
By the definition of subadditive pressure,
$\lim_{n\to\infty}(\log\mathcal Z_n)/n=P_{\sigma}(\mathcal{G}^s)=0$.
Hence
$\lim_{n\to\infty}P_{\sigma^n}(g_n^s)/n=0$.
Since this sequence is non-negative and converges to zero, its infimum is zero. This proves the lemma. 
\end{proof}

We now show that the pressure zeros $q_n$ converge to $s$.
The argument follows the proof of \cite[Theorem~1.1]{FengSimonI}, specialized to the full shift.

\begin{prop}\label{prop:qn-s}
We have $q_n>s$ for every $n\geq 1$ and $\lim_{n\to\infty}q_n=s$. 
\end{prop}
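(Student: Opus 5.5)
The plan is to compare the additive potential $\log G^{(n)}+q\log H^{(n)}$ directly with the subadditive block potential $g_n^q$, using the identity for the singular-value function. Recall that $k=\lfloor s\rfloor$ with $s<d$. For any matrix $A$ and any $q\in[k,k+1)$,
\[
\frac{\phi^k(A)}{\alpha_{k+1}(A)^k}\,\alpha_{k+1}(A)^q=\phi^k(A)\,\alpha_{k+1}(A)^{q-k}=\phi^q(A).
\]
Applying this with $A=D_{\Pi(\sigma^n\omega)}f_{\omega|n}$ gives, for $q\in[k,k+1)$,
\[
\log G^{(n)}(\omega)+q\log H^{(n)}(\omega)=\log Q+g_n^q(\omega).
\]
Since adding a constant shifts pressure by that constant, we obtain
\[
P_{\sigma^n}\bigl(\log G^{(n)}+q\log H^{(n)}\bigr)=\log Q+P_{\sigma^n}(g_n^q)\qquad(q\in[k,k+1)).
\]

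\emph{Lower bound $q_n>s$.} Take $q=s\in[k,k+1)$ in the identity above. Lemma~\ref{lem:block-pressure} gives $P_{\sigma^n}(g_n^s)\geq0$, so the pressure at $q=s$ is at least $\log Q>0$. The map $q\mapsto P_{\sigma^n}(\log G^{(n)}+q\log H^{(n)})$ is strictly decreasing and vanishes at $q_n$. Hence $q_n>s$ for every $n\geq1$.

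\emph{Upper bound.} Fix $\varepsilon>0$ small enough that $t:=s+\varepsilon<k+1$; this is possible since $s<k+1$. The argument giving $\mathcal W_{m,n}\leq\mathcal Z_n^m$ in the proof of Lemma~\ref{lem:block-pressure} used only the definition of $g_n$ and never the value of $s$. Repeating it with $s$ replaced by $t$ yields
\[
P_{\sigma^n}(g_n^t)\leq\log\mathcal Z_n(t),\qquad \mathcal Z_n(t)=\sum_{I\in\mathcal A^n}\exp\Bigl(\sup_{\omega\in[I]}g_n^t(\omega)\Bigr).
\]
By definition of subadditive pressure, $\frac1n\log\mathcal Z_n(t)\to P_\sigma(\mathcal G^t)$. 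Since $s\mapsto P_\sigma(\mathcal G^s)$ is strictly decreasing with zero at $s$, we have $P_\sigma(\mathcal G^t)<0$. Thus $\log\mathcal Z_n(t)\leq \tfrac n2P_\sigma(\mathcal G^t)$ for all large $n$, and the fixed constant $\log Q$ is eventually dominated by this linearly decreasing quantity. Therefore
\[
P_{\sigma^n}\bigl(\log G^{(n)}+t\log H^{(n)}\bigr)\leq\log Q+\log\mathcal Z_n(t)<0
\]
for $n$ large. By monotonicity this forces $q_n<t=s+\varepsilon$. Combined with $q_n>s$ and letting $\varepsilon\downarrow0$, we get $q_n\to s$.

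The main point needing care is that the clean identity relating $G^{(n)},H^{(n)}$ to $\phi^q$ holds only for $q\in[k,k+1)$. Both test values must therefore be kept in this window: $q=s$ is automatically admissible, and $q=s+\varepsilon$ is admissible once $\varepsilon$ is small. Outside this window we never need the identity, since monotonicity of the pressure in $q$ transfers the sign information to the zero $q_n$. Everything else is routine bookkeeping already contained in Lemma~\ref{lem:block-pressure}.
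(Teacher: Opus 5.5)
Your proof is correct. The lower bound $q_n>s$ is exactly the paper's argument, but your upper bound takes a slightly different route.

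The paper never evaluates the pressure away from $s$. It writes $\log G^{(n)}+q_n\log H^{(n)}=g_n^s+\log Q+(q_n-s)\log H^{(n)}$, which holds for every value of $q_n$ by linearity in $q$, so no window $[k,k+1)$ is needed. It then uses the contraction bound $\log H^{(n)}\le n\log\gamma$ from \eqref{eqHnOmega} to obtain the single explicit estimate
$0<q_n-s\le \bigl(P_{\sigma^n}(g_n^s)+\log Q\bigr)/(n|\log\gamma|)$. The right-hand side tends to zero because $\frac1nP_{\sigma^n}(g_n^s)\to0$ by Lemma~\ref{lem:block-pressure}.

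You instead test at a point $t=s+\varepsilon<k+1$ and apply the singular-value identity at $t$ itself. You then combine $P_{\sigma^n}(g_n^t)\le\log\mathcal Z_n(t)$ with the strict negativity $P_\sigma(\mathcal G^t)<0$. This replaces the explicit decay of $H^{(n)}$ by the strict monotonicity of $t\mapsto P_\sigma(\mathcal G^t)$, which itself ultimately rests on contraction. It also obliges you to keep $t$ inside $[k,k+1)$, a point you handle correctly.

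Both arguments are valid. The paper's version is a one-line inequality and gives a quantitative rate for $q_n\downarrow s$. Yours is a soft $\varepsilon$-argument that yields only qualitative convergence, but it is equally short and relies on nothing beyond the paper's preliminaries.
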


\begin{proof}
Since $s\in[k,k+1)$, the definition of the singular-value function gives
\begin{equation}\label{eqGnsHn}
\log G^{(n)}+s\log H^{(n)}=g_n^s+\log Q.
\end{equation}
By the constant-shift property of topological pressure and Lemma~\ref{lem:block-pressure},
	\[
	P_{\sigma^n}\bigl(\log G^{(n)}+s\log H^{(n)}\bigr)
	=P_{\sigma^n}(g_n^s)+\log Q>0.
	\]
Since the function $q\mapsto P_{\sigma^n}\bigl(\log G^{(n)}+q\log H^{(n)}\bigr)$ is strictly decreasing, we have $q_n>s$.
	
Set $v_n=\log G^{(n)}+q_n\log H^{(n)}$. Since $q_n>s$, 
	\eqref{eqGnsHn} and \eqref{eqHnOmega} give
	\begin{align*}
			v_n
			=g_n^s+\log Q+(q_n-s)\log H^{(n)}\leq g_n^s+\log Q-(q_n-s)n|\log\gamma|.
	\end{align*}
	By monotonicity of topological pressure and \eqref{eq:qnZero}, we obtain
	\[
	0=P_{\sigma^n}(v_n)
	\leq P_{\sigma^n}(g_n^s)+\log Q-(q_n-s)n|\log\gamma|.
	\]
Therefore,
\[0<q_n-s\leq \frac{P_{\sigma^n}(g_n^s)+\log Q}{n|\log\gamma|}.\]
The right-hand side tends to zero
	by Lemma~\ref{lem:block-pressure}, and hence $\lim_{n\to\infty}q_n=s$.
\end{proof}

\begin{proof}[Proof of Theorem~\ref{thm:main}]
If $s=\dim_{\mathrm S}\Phi\geq d$, there is nothing to prove.  If $s<d$, letting $n\to\infty$ in \eqref{eq:qnbound} and using Proposition~\ref{prop:qn-s} gives 
$\overline{\dim}_{\mathrm B}K_C\leq\max\{c,s\}$,
as desired.
\end{proof}

\section{Orbital sets driven by subshifts}\label{sec:subshift-orbits}

We retain the standing assumptions on the IFS $\Phi$.
The orbital representation \eqref{eqOrbital} suggests restricting
the compositions to words from a subshift.
Let $X\subset\Sigma$ be non-empty and compact, with
$\sigma X\subset X$; we call such an $X$ a \emph{subshift}. For $n\geq1$, put
\[
X_n=\{\omega|n:\omega\in X\},\qquad X_0=\{\vartheta\}.
\]
Forward invariance implies that every contiguous subword of a word
in $X_n$ is also allowed in $X$.
For a compact set $C\subset Z$, define
\begin{equation}\label{eq:SubshiftOrbital}
	\mathcal O_X(C)=\bigcup_{n=0}^{\infty}\ \bigcup_{I\in X_n}f_I(C),
	\qquad E_X(C)=\Pi(X)\cup\mathcal O_X(C).
\end{equation}
We refer to $\mathcal O_X(C)$ as the \emph{orbital set} driven by $X$.
In particular, $E_\Sigma(C)=K_C$ and $E_X(\emptyset)=\Pi(X)$.

The set $E_X(C)$ is compact. This follows from the same compactness argument as for the orbital representation
\eqref{eqOrbital}:
any accumulation point arising from images $f_I(C)$
along allowed words whose lengths tend to infinity
belongs to the compact set $\Pi(X)$. Moreover, if $C\neq\emptyset$, then every point of $\Pi(X)$ is a limit
of points in $\mathcal O_X(C)$, and hence
\[E_X(C)=\overline{\mathcal O_X(C)},\qquad
	\overline{\dim}_{\mathrm B}E_X(C)
	=\overline{\dim}_{\mathrm B}\mathcal O_X(C).\]
For a general $X$, the set $E_X(C)$ need not satisfy the ordinary
inhomogeneous fixed-point equation \eqref{eq:InhomogeneousIdentity},
since arbitrary concatenations of allowed words need not be allowed.

For $t\geq0$, restrict the potential $\mathcal G^t$ from
\eqref{eqGs} to $X$ and write
\begin{equation*}
		P_X(t)=P(X,\sigma,\mathcal G^t)=\lim_{n\to\infty}\frac1n
		\log\sum_{I\in X_n}
		\sup_{\omega\in[I]\cap X}\exp\bigl(g_n^t(\omega)\bigr).
\end{equation*}
As in Section~\ref{sec:preliminaries}, $P_X(t)$ is continuous
and strictly decreasing on $[0,\infty)$, with
$P_X(0)\geq0$ and
$P_X(t)\to-\infty$ as $t\to\infty$.
Following \cite[Definition~2.5]{FengSimonI}, let
$s_X$ be its unique non-negative zero and call it the \emph{singularity dimension of $X$ with respect to $\Phi$}.
Clearly, $s_\Sigma=\dim_{\mathrm S}\Phi$.

In the self-similar setting, the construction in \eqref{eq:SubshiftOrbital} and the
corresponding upper box-dimension bound were studied by
Dubey and Verma \cite{DubeyVerma}.
The following theorem establishes the analogous bound
for $C^1$ IFSs, extending Theorem~\ref{thm:main} to
orbital sets driven by subshifts.

\begin{thm}\label{thm:subshift-orbits}
	Let $X\subset\Sigma$ be a non-empty compact subshift and let
	$C\subset Z$ be compact. Then
	\begin{equation}\label{eq:subshift-upper}
		\overline{\dim}_{\mathrm B}E_X(C)
		\leq\max\{s_X,\overline{\dim}_{\mathrm B}C\}.
	\end{equation}
\end{thm}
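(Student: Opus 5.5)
The plan is to follow the hint in the introduction: realise $E_X(C)$ inside ordinary inhomogeneous attractors of block IFSs built from allowed words, apply Theorem~\ref{thm:main} to these, and then let the block length tend to infinity using the pressure approximation of \cite[Proposition~3.1]{FengSimonI}.

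Fix $n\geq1$ and let $\Phi_X^{(n)}=\{f_I:I\in X_n\}$, a $C^1$ IFS on $Z$ with common contraction constant $\gamma^n$. Put
\[
C_X^{[n]}=\bigcup_{j=0}^{n-1}\ \bigcup_{J\in X_j}f_J(C),
\]
a finite union of Lipschitz images of $C$. Hence $\overline{\dim}_{\mathrm B}C_X^{[n]}\leq\overline{\dim}_{\mathrm B}C$.

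The first step is the inclusion
\[
E_X(C)\subset K^{(n)}_{C_X^{[n]}},
\]
where the right-hand side is the inhomogeneous attractor of $\Phi_X^{(n)}$ with condensation set $C_X^{[n]}$. Any allowed word $I\in X_m$ can be written as $m=pn+j$ with $0\leq j<n$ and $I=I_1\cdots I_pJ$, where $I_r\in X_n$ and $J\in X_j$; this uses that subwords of allowed words are allowed. Thus $f_I(C)=f_{I_1}\circ\cdots\circ f_{I_p}(f_J(C))$ lies in the orbital representation \eqref{eqOrbital} for the block system. Likewise $\Pi(X)$ lies in the homogeneous attractor of $\Phi_X^{(n)}$, since every $\omega\in X$ is a concatenation of blocks in $X_n$. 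Theorem~\ref{thm:main} then gives
\[
\overline{\dim}_{\mathrm B}E_X(C)\leq\max\bigl\{\dim_{\mathrm S}\Phi_X^{(n)},\,\overline{\dim}_{\mathrm B}C\bigr\}.
\]

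The second step, and the main obstacle, is to show $\liminf_n\dim_{\mathrm S}\Phi_X^{(n)}\leq s_X$. The singularity dimension of $\Phi_X^{(n)}$ is computed on the full shift $(X_n)^{\N}$, which is generally larger than $X$; its coding map sends a sequence of blocks to $\lim f_{I_1\cdots I_p}(x_0)$. So I will compare its pressure with the pressure on $X^{(n)}:=(X_n)^{\N}$, viewed as a $\sigma^n$-invariant subset of $\Sigma$. This is exactly the set appearing in \cite[Proposition~3.1]{FengSimonI}, which asserts that $\tfrac1nP(X^{(n)},\sigma^n,g_n^t)\to P_X(t)$ for the subadditive potential restricted to $X$. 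Using submultiplicativity, as in Lemma~\ref{lem:block-pressure}, the zero-th pressure of $\Phi_X^{(n)}$ at parameter $t$ is at most $\tfrac1nP(X^{(n)},\sigma^n,g_n^t)$, up to normalisation by $n$.

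Now fix $t>s_X$. Then $P_X(t)<0$, so for large $n$ the block pressure at $t$ is negative, which gives $\dim_{\mathrm S}\Phi_X^{(n)}\leq t$. Alternatively, I can bypass the block singularity dimension and bound the additive zero $q_n$ directly, as in Proposition~\ref{prop:qn-s}. There the $\log Q$ term is absorbed by the factor $n|\log\gamma|$ and the argument goes through verbatim with $P_{\sigma^n}$ replaced by the pressure on $X^{(n)}$, via Proposition~\ref{prop:additive-upper} applied to $\Phi_X^{(n)}$.

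The delicate point is that the derivative in the potential for $\Phi_X^{(n)}$ is evaluated at the coding point of a block sequence. That point lies in $\Pi(X^{(n)})$ rather than necessarily in $\Pi(X)$, which is why $X^{(n)}$, and not $X$, is the right space, and why Feng--Simon's Proposition~3.1 is needed. Letting $t\downarrow s_X$ gives \eqref{eq:subshift-upper}.
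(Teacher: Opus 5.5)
Your proposal is correct and follows essentially the same route as the paper. You use the block IFS indexed by $X_n$ with condensation set $\bigcup_{j<n}\bigcup_{J\in X_j}f_J(C)$, and you embed $E_X(C)$ in its inhomogeneous attractor by splitting allowed words into $n$-blocks plus a short tail. You then apply Proposition~3.1 of Feng and Simon on $X^{(n)}$ to get $P(X^{(n)},\sigma^n,g_n^t)<0$ for $t>s_X$, and use subadditivity to place the block singularity dimension below $t$ before invoking Theorem~\ref{thm:main}. Your phrase ``up to normalisation by $n$'' is loose but harmless, since only the sign of that block pressure matters.
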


\begin{proof}
	Put $c=\overline{\dim}_{\mathrm B}C$ and fix
	$t>\max\{s_X,c\}$. For each $m\geq1$, define
	\begin{equation*}
		X^{(m)}=\{I_1I_2\cdots:I_j\in X_m\text{ for all }j\geq1\},
		\qquad \Phi_{X,m}=(f_I)_{I\in X_m}.
	\end{equation*}
	Maps in the IFS $\Phi_{X,m}$ are indexed by the words in $X_m$, with
	distinct labels retained even when the corresponding maps coincide.
	The concatenation map identifies the full shift $(X_m)^{\N}$ with
	$(X^{(m)},\sigma^m)$. Under this identification, the coding map of $\Phi_{X,m}$ is 
	$\Pi|_{X^{(m)}}$, and  its homogeneous attractor is $\Pi(X^{(m)})$. The system $\Phi_{X,m}$ is a $C^1$ IFS on $Z$ with common contraction constant at most $\gamma^m$.
	
	Set
	\begin{equation*}
		D_m=\bigcup_{k=0}^{m-1}\ \bigcup_{J\in X_k}f_J(C),
	\end{equation*}
	and let $E_m$ be the ordinary inhomogeneous attractor of
	$\Phi_{X,m}$ with condensation set $D_m$, so that
	\[
	E_m=D_m\cup\bigcup_{I\in X_m}f_I(E_m).
	\]
Applying the orbital representation \eqref{eqOrbital} to
$\Phi_{X,m}$ with condensation set $D_m$, we obtain
\[
E_m=\Pi(X^{(m)})\cup D_m\cup\bigcup_{p=1}^{\infty}
	\ \bigcup_{I_1,\ldots,I_p\in X_m}
	\bigl(f_{I_1}\circ\cdots\circ f_{I_p}\bigr)(D_m).
\]

We assert that
	\begin{equation}\label{eq:subshift-enclosure}
		\overline{\dim}_{\mathrm B}D_m=c,\qquad E_X(C)\subset E_m.
	\end{equation}
	Indeed, the dimension identity holds since $D_m$ is a finite union of Lipschitz images of $C$ and contains $C$.
	To verify that $E_X(C)\subset E_m$, first note that $X\subset X^{(m)}$, whence
	$\Pi(X)\subset\Pi(X^{(m)})\subset E_m$.
	For $n\geq 1$ and $W\in X_n$, write $n=pm+r$, where $0\leq r<m$, and split
	$W=I_1\cdots I_pJ$ with  $|J|=r$ and $|I_i|=m$ for $1\leq i\leq p$.
Since $\sigma X\subset X$, we have  $I_i\in X_m$ and $J\in X_r$, and hence
	\[
	f_W(C)=f_{I_1}\circ\cdots\circ f_{I_p}\bigl(f_J(C)\bigr)
	\subset f_{I_1}\circ\cdots\circ f_{I_p}(D_m)\subset E_m,
	\]
where the empty composition is interpreted as the identity
when $p=0$.
Since $C\subset D_m\subset E_m$, the inclusion follows.
	
We now choose the block length using pressure approximation.
The potential $\mathcal G^t$ is continuous and subadditive on the
	whole space $\Sigma$, so \cite[Proposition~3.1]{FengSimonI} gives
	\begin{equation*}
		\lim_{m\to\infty}\frac1m
		P(X^{(m)},\sigma^m,g_m^t)=P_X(t)<0.
	\end{equation*}
	Here $P(X^{(m)},\sigma^m,g_m^t)$ is the ordinary additive pressure
	on the full shift over the block alphabet $X_m$.
	Choose $m$ sufficiently large that this pressure is negative.
	
	Under the concatenation identification, the singular-value
	potential of $\Phi_{X,m}$ is $(g_{mj}^s)_{j\geq1}$, for $s\geq0$.
	Thus its singularity dimension $s_m=\dim_{\mathrm S}\Phi_{X,m}$
	is the zero of the subadditive pressure
	\[
	\mathscr P_m(s)
	=P\bigl(X^{(m)},\sigma^m,(g_{mj}^s)_{j\geq1}\bigr).
	\]
	Subadditivity gives
	\[
	g_{mj}^t(\omega)
	\leq\sum_{a=0}^{j-1}g_m^t(\sigma^{am}\omega),
	\qquad \omega\in X^{(m)},\quad j\geq1.
	\]
By monotonicity of pressure,
	\begin{equation*}
		\mathscr P_m(t)\leq P(X^{(m)},\sigma^m,g_m^t)<0.
	\end{equation*}
	It follows that $s_m<t$. Applying Theorem~\ref{thm:main} to
	$\Phi_{X,m}$ and $D_m$, and using \eqref{eq:subshift-enclosure},
	we obtain
	\[
	\overline{\dim}_{\mathrm B}E_X(C)
	\leq\overline{\dim}_{\mathrm B}E_m
	\leq\max\{s_m,c\}<t.
	\]
	Letting $t\downarrow\max\{s_X,c\}$ proves
	\eqref{eq:subshift-upper}. 
\end{proof}

\section{Consequences}\label{sec:consequences}

In this section, we give conditions under which the upper bound in Theorem~\ref{thm:main} is attained.

\begin{cor}\label{cor1} The following properties hold. 
\begin{itemize}
	\item[{\rm (i)}] If $\overline{\dim}_{\mathrm B} C\geq\dim_{\mathrm S}\Phi$,
	then $\overline{\dim}_{\mathrm B}K_C=\overline{\dim}_{\mathrm B}C$.
	\item[{\rm (ii)}] If $\overline{\dim}_{\mathrm B} K=\dim_{\mathrm S}\Phi$,
	then
	\[\overline{\dim}_{\mathrm B}K_C
	=\max\{\dim_{\mathrm S}\Phi,\overline{\dim}_{\mathrm B}C\}
	=\max\{\overline{\dim}_{\mathrm B}K,\overline{\dim}_{\mathrm B}C\}.\]
\end{itemize}
\end{cor}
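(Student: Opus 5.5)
Both parts follow by pairing the upper bound of Theorem~\ref{thm:main} with the lower bound coming from monotonicity of upper box dimension. Since $C\subset K_C$ and $K\subset K_C$ (see \eqref{eqOrbital}), we have
\[
\overline{\dim}_{\mathrm B}K_C\geq\max\{\overline{\dim}_{\mathrm B}K,\overline{\dim}_{\mathrm B}C\}.
\]
This inequality holds trivially when $C=\emptyset$ by the convention $\overline{\dim}_{\mathrm B}\emptyset=0$. The plan is to show that, under each hypothesis, the maximum in Theorem~\ref{thm:main} coincides with this lower bound.

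For (i), assume $\overline{\dim}_{\mathrm B}C\geq\dim_{\mathrm S}\Phi$. Then $\max\{\dim_{\mathrm S}\Phi,\overline{\dim}_{\mathrm B}C\}=\overline{\dim}_{\mathrm B}C$, so Theorem~\ref{thm:main} gives $\overline{\dim}_{\mathrm B}K_C\leq\overline{\dim}_{\mathrm B}C$. The inclusion $C\subset K_C$ supplies the reverse inequality, and hence equality.

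For (ii), assume $\overline{\dim}_{\mathrm B}K=\dim_{\mathrm S}\Phi$. Substituting this into the lower bound yields
\[
\overline{\dim}_{\mathrm B}K_C\geq\max\{\dim_{\mathrm S}\Phi,\overline{\dim}_{\mathrm B}C\},
\]
and Theorem~\ref{thm:main} gives the matching upper bound. The second equality in (ii) is then the hypothesis itself, used to rewrite $\dim_{\mathrm S}\Phi$ as $\overline{\dim}_{\mathrm B}K$.

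No step is a real obstacle, since all the analytic work is contained in Theorem~\ref{thm:main}. The only points needing care are the inclusions $K\subset K_C$ and $C\subset K_C$, both of which are explicit in \eqref{eqOrbital}.
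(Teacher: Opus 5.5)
Your proposal is correct and follows essentially the same route as the paper: Theorem~\ref{thm:main} supplies the upper bound in both parts, and the reverse inequalities come from monotonicity of upper box dimension via $C\subset K_C$ in (i) and $K\cup C\subset K_C$ in (ii).
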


\begin{proof}
Theorem~\ref{thm:main} gives the required upper bounds
in both cases. The reverse inequalities follow from
$C\subset K_C$ in \textup{(i)} and
$K\cup C\subset K_C$ in \textup{(ii)}.
\end{proof}

Recent work of Li, Pan, Tong, and Xu~\cite{LiPanTongXu} gives broad classes of planar nonlinear systems for which the attractor attains the singularity dimension.  Combining their result with Corollary~\ref{cor1}(ii) gives the following.

\begin{cor}\label{cor2}
Let $\Phi=\{f_i\}_{i=1}^{\ell}$ be a planar $C^r$ IFS, $r>1$, satisfying
the standing assumptions of this paper, as well as
the strong separation condition, uniform non-conformality,
and weak irreducibility in the sense of \cite[Section~2.8]{LiPanTongXu}.
Let $s_0=\dim_{\rm S}\Phi$.  Then, for every compact condensation set $C\subset Z$,
\begin{equation}\label{eq:planarKC}
\overline{\dim}_{\mathrm B}K_C=\max\{s_0,\overline{\dim}_{\mathrm B}C\}.
\end{equation}
If $\dim_{\mathrm B}C$ exists, then $\dim_{\mathrm B} K_C$ also exists and
\begin{equation}\label{eq:planarBKC}
\dim_{\mathrm B} K_C=\max\{s_0,\dim_{\mathrm B} C\}.
\end{equation}
\end{cor}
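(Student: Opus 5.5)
The plan is to combine Corollary~\ref{cor1}(ii) with the dimension result of Li, Pan, Tong, and Xu~\cite{LiPanTongXu}. Under the stated assumptions (planar $C^r$ with $r>1$, strong separation, uniform non-conformality, and weak irreducibility), their main theorem gives $\dim_{\mathrm H}K=\dim_{\mathrm B}K=s_0$. In particular $\overline{\dim}_{\mathrm B}K=s_0=\dim_{\mathrm S}\Phi$. The only point to verify here is that their notion of singularity dimension agrees with the one in Section~\ref{sec:preliminaries}. Both are defined as the zero of the subadditive pressure of $\log\phi^s$ of the derivative cocycle along the coding map, following \cite{FengSimonI}, so this is a matter of matching conventions. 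Once it is checked, Corollary~\ref{cor1}(ii) immediately yields \eqref{eq:planarKC}.

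For \eqref{eq:planarBKC}, assume $\dim_{\mathrm B}C$ exists. The upper bound comes from \eqref{eq:planarKC}:
\[\overline{\dim}_{\mathrm B}K_C\leq\max\{s_0,\dim_{\mathrm B}C\}.\]
For the lower bound, use monotonicity of lower box dimension together with $K\cup C\subset K_C$:
\[\underline{\dim}_{\mathrm B}K_C\geq\max\{\underline{\dim}_{\mathrm B}K,\underline{\dim}_{\mathrm B}C\}=\max\{s_0,\dim_{\mathrm B}C\}.\]
Here we use that $\dim_{\mathrm B}K$ exists and equals $s_0$, which in turn follows from $s_0=\dim_{\mathrm H}K\leq\underline{\dim}_{\mathrm B}K\leq\overline{\dim}_{\mathrm B}K\leq s_0$, the last inequality being Feng--Simon (or Theorem~\ref{thm:main} with $C=\emptyset$). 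The two bounds coincide, so $\dim_{\mathrm B}K_C$ exists and equals $\max\{s_0,\dim_{\mathrm B}C\}$.

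The main obstacle is not analytic. It is checking that the hypotheses of \cite{LiPanTongXu} are compatible with the standing assumptions of this paper. Specifically, their maps must be $C^r$ diffeomorphisms on a neighbourhood $U$ that contract uniformly as in \eqref{eq:contraction}, and the conclusion must be stated as the equality of the Hausdorff (or lower box) dimension of $K$ with $\dim_{\mathrm S}\Phi$ as defined here. If their result gives only the Hausdorff dimension, this is still enough, since Hausdorff dimension is at most the lower box dimension. Everything else reduces to Corollary~\ref{cor1} and monotonicity.
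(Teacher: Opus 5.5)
Your proposal is correct and follows the paper's proof. You identify $s_0$ with the singularity dimension of \cite{LiPanTongXu}, invoke their Theorem~D to get $\dim_{\mathrm H}K=\dim_{\mathrm B}K=s_0$, apply Corollary~\ref{cor1}(ii), and get the box-dimension statement from monotonicity via $K\cup C\subset K_C$. The paper carries out the convention check you flag through the identity $\sup_{\omega\in[I]}\exp(g_n^s(\omega))=\sup_{x\in K}\phi^s(D_xf_I)$, which holds because $\Pi(\sigma^n\omega)$ ranges over all of $K$ as $\omega$ ranges over $[I]$.
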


\begin{proof}
For every $n\geq 1$,  $I\in\mathcal A^n$, and $s\in [0,2]$, we have
\[
\sup_{\omega\in[I]}\exp(g_n^s(\omega))
=\sup_{x\in K}\phi^s(D_xf_I).
\]
Thus the pressure in \cite[(2.17)]{LiPanTongXu} agrees with
$P_\sigma(\mathcal G^s)$ on $[0,2]$,
and its unique zero equals $s_0$.
By \cite[Theorem D]{LiPanTongXu},
\[
\dim_{\mathrm H}K=\dim_{\mathrm B} K=s_0.
\]
Then \eqref{eq:planarKC} follows from Corollary~\ref{cor1}(ii).  If $\dim_{\mathrm B} C$ exists, then monotonicity gives
\[
\underline{\dim}_{\mathrm B} K_C\geq\max\{\dim_{\mathrm B} K,\dim_{\mathrm B} C\}=\max\{s_0,\dim_{\mathrm B}C\}.
\]
Together with \eqref{eq:planarKC}, this proves \eqref{eq:planarBKC}.
\end{proof}

{\noindent \bf  Acknowledgements}. The author was supported by the National Natural Science
Foundation of China (Grant No.~12301110). AI tools were used during the preparation of the paper for mathematical checking,  editorial organization, and typesetting. The author takes full responsibility for the content of the paper.

\end{document}